\theoremstyle{plain}
\newtheorem{theorem}{Theorem}[section]
\newtheorem{lemma}[theorem]{Lemma}
\newtheorem{proposition}[theorem]{Proposition}
\theoremstyle{definition}
\newtheorem{definition}[theorem]{Definition}
\newtheorem{remark}[theorem]{Remark}
\theoremstyle{remark}
\mathchardef\emptyset="001F
\numberwithin{equation}{section}
\newcommand{\dt}{\,\mathrm{d}t}
\newcommand{\dist}{{\rm{dist}}}
\def\R{\mathbb{R}}
\def\w{\omega}
\def\a{\alpha}
\def\b{\beta}
\def\g{\gamma}
\begin{document}

\author{Marco Cicalese}
\address[Marco Cicalese]{Zentrum Mathematik - M7, Technische Universit\"at M\"unchen, Boltzmannstrasse 3, 85747 Garching, Germany}
\email{cicalese@ma.tum.de}

\author{Francesco Solombrino}
\address[Francesco Solombrino]{Zentrum Mathematik - M7, Technische Universit\"at M\"unchen, Boltzmannstrasse 3, 85747 Garching, Germany\vspace{-.3cm}}
\address{New: Dip. Mat. Appl. ``Renato Caccioppoli'', Univ. Napoli ``Federico II'', Via Cintia, Monte S. Angelo
80126 Napoli, Italy}
\email{solombri@ma.tum.de}

\author{Matthias Ruf}
\address[Matthias Ruf]{Zentrum Mathematik - M7, Technische Universit\"at M\"unchen, Boltzmannstrasse 3, 85747 Garching, Germany}
\email{mruf@ma.tum.de}

\title{Hemihelical local minimizers in prestrained elastic bi-strips}

\begin{abstract}
We consider a double layered prestrained elastic rod in the limit of vanishing cross section. For the resulting limit Kirchoff-rod model 
with intrinsic curvature we prove a supercritical bifurcation result, rigorously showing the emergence of a branch of hemihelical local minimizers from the straight configuration, at a critical force and under clamping at both ends. As a consequence we obtain the existence of nontrivial local minimizers of the $3$-d system. 
\end{abstract}

\maketitle

\begin{section}{Introduction}

The derivation of the elastic energy of a thin object as limit of the elastic energy of the three-dimensional body when its thickness vanishes has recently gained increasing attention in the case of prestrained bodies. Many authors contributed to this topics, both in the case of $3$-d to $2$-d dimension reduction as in \cite{BMS,  LMP1, LMP2, S} and in the case of $3$-d to $1$-d dimension reduction as in \cite{CRS16, KOB, LA} (see also \cite{ADS, ADSK} for a similar problem in the theory of nematic elastomers). 

As a model example in the $3$-d to $1$-d case in \cite{CRS16}, motivated by recent experiments in \cite{Plos}, we have considered the following model: given two strips of elastomers of the same initial width, but unequal length, one stretches the short one uniaxially to be equal in length to the longer one and then glues them together side-by-side along their length. In such a way a bi-strip system is formed in which the initially shorter strip is under a uniaxial prestrain. The bi-strip is made flat by the presence of a terminal load which is gradually released so that it starts to bend and twist out of plane. According to \cite{Plos} the system may evolve towards either a helical or a hemihelical shape, more complex structure in which helices with different chiralities seem to periodically alternate, leading to the formation of the so-called perversions. 

In the paper \cite{CRS16} we have analyzed the $3$-d to $1$-d limit of the bi-strip system above via $\Gamma$-convergence, rigorously proving that for small prestrains the Kirchoff-rod energy with intrinsic curvature is the right variational approximation of the model. Among other things we have also proved a preliminary bifurcation result, showing the emergence of a branch of nontrivial stationary points from the straight configuration, at a critical force $f_{crit}$ and under clamping at both ends.  Furthermore, along the lines of a result by Kohn and Sternberg (\cite{KS}), we have proved the existence of local minimizer for the $3d$-model arbitrarily close to the limiting ones. Such a result however could not be applied to the branch of critical points we found, since we had no information about their stability.\\ 

In the present paper we fill this gap by a thorough analysis establishing that the branch of stationary points found in \cite{CRS16} is locally made of exactly two {\it non trivial strict local minimizers}, close to the straight configuration, for any given value of the force {\it in a left neighborhood} of the critical force $f_{crit}$ (see the bifurcation diagram in Fig \ref{fig-intro}). Taking into account our sign convention, motivated by the mechanical experiment we have in mind, this corresponds to a supercritical bifurcation diagram. As we discuss in Remark \ref{shape}, these local minimizers have a so-called hemihelical structure. The emergence of such configurations is a well-known experimental fact in the mechanical literature (see \cite[Introduction]{tendril}  for an heuristic description of the phenomenon) which finds now a full mathematical justification. We additionally prove that for those values of the forces, the local minimizers have less energy than the straight configuration, an information that can be important to study evolutionary problems. Furthermore, thanks to our result in \cite{CRS16}, existence of $3$-d nontrivial local minimizers arbitrarily close to these configurations immediately follows (see Theorem \ref{hemihelix}). Coming back to the experiment in \cite{Plos}, it is worth noticing, as we discuss in Remark \ref{multiple}, that our result do not cover the case of multiple perversions for which a different analysis seems to be needed (see also \cite{LA}). \\


\begin{figure}[h]
\centering{
\def\svgwidth{95mm}
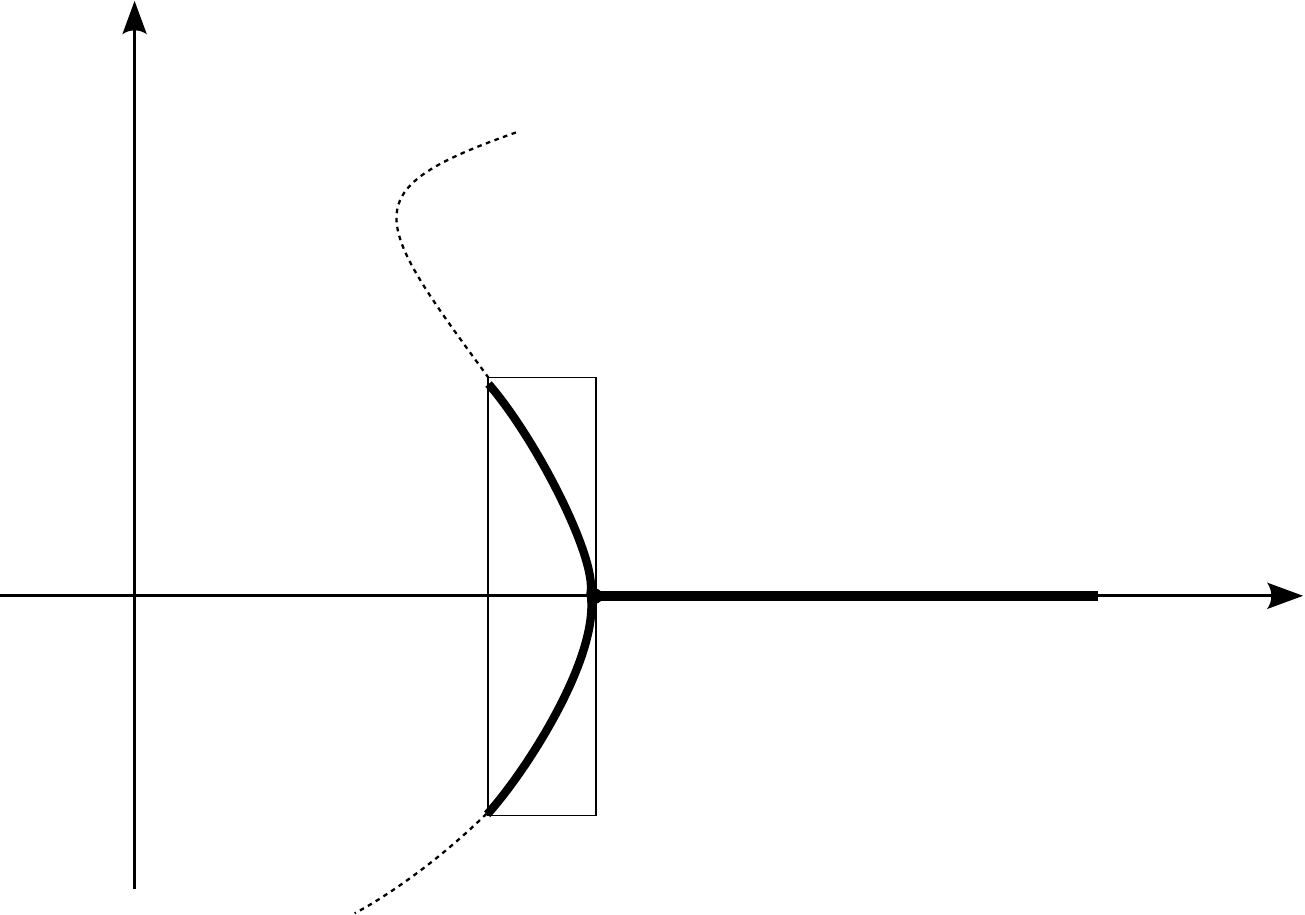
\caption{A local bifurcation diagram of the energy. The bold lines indicates stable configurations.}
	\label{fig-intro}
}
\end{figure}

The main idea behind the proof of our main result (Theorem \ref{localopt}) is a careful combination of variational techniques  with classical results about eigenvalue problems in Banach spaces. In particular we show that if the second differential of the energy along a curve of stationary points is not positive definite, then a nontrivial solution to an eigenvalue problem (see \eqref{stronglagrange}) appears. The behaviour of the smallest eigenvalue of such a problem can be determined using a classical result by Crandall and Rabinowitz (Theorem \ref{CRII}). Eventually this allows us to find a contradiction and then show that our stationary points are indeed strict local minimizers. It is worth pointing out that the abstract approach to bifurcation problems (see \cite{AmPr}) is usually formulated in spaces of smooth functions in order to have high differentiability of the involved functionals, while local minimality of our configurations is meaningful in the natural energy space of Sobolev functions. We fill this gap thanks to the regularity of the solutions of the Euler-Lagrange equation. \\

From a technical point of view, in order to prove these results we need to describe our energy functional in terms of Cardan angles (see Section \ref{section 3}), while for the existence result in \cite[Theorem 5.6]{CRS16} we introduced a different auxiliary functional. This latter, however, would be not useful to perform the more detailed analysis contained in the present paper, since its differentials coincide with those of the energy only at first order, while for some auxiliary computations as those in Lemma \ref{abc} and for the arguments in Theorem \ref{localopt} we need to use differentials of the energy of higher order. Moreover, it is only thanks to the use of Cardan angles that already in Proposition \ref{definebranch} we can obtain more information than in \cite[Theorem 5.6]{CRS16}, namely the local behaviour of the force component of the bifurcation curve, as well as the energy inequality (iii) in the statement. 

\end{section}

\section{Notation and preliminaries}
\subsection{Basic notation}
We denote by $\{e_{1},e_{2},e_{3}\}$ the standard basis in $\R^{3}$, by $\mathbb{M}^{3\times 3}$ the set of all real-valued $3\times3$ matrices and by $I$ the identity matrix. Given a matrix $M$ we denote by $M^{T}$ its transposed matrix (this convention will be also used for row and column vectors). We let $SO(3)\subset\mathbb{M}^{3\times 3}$ be the submanifold of all rotations, while $\mathbb{M}^{3\times 3}_{skew}$ denotes the linear space of the $3\times 3$ skew-symmetric matrices. Given $A\in\mathbb{M}^{3\times 3}_{skew}$ we define $\omega_{A}$ as the unique vector such that $Av=\omega_{A}\times v$ for all $v\in\R^3$. \\
All euclidean spaces will be endowed with the canonical Euclidean norm. 
The symbol $\langle \cdot,\cdot\rangle$ indicates duality products in euclidean or Banach spaces. For an operator $F$ between Banach spaces, we will denote by $N$ its nullspace and by $Rg$ its range. 
The derivative of one-dimensional absolutely continuous functions will be denoted by the prime symbol $^{\prime}$. We will use the standard notation $D$ (resp. $D^2$, $D^3$...) for first (resp. second, third...) order (partial) differentials of operators on Euclidean or Banach spaces, while the symbol $\partial$ will be used for partial derivatives in $\R^n$. Only in this last context, with a slight abuse of notation we will sometimes identify differentials with the corresponding derivatives for notational simplicity.

\subsection{A simple experiment with two-layer prestrained beam and its mathematical modeling}\label{exp}
The situation which we aim to describe in this paper is the following. We deform a two-layer elastic beam of length $L$ and cross section $h S$, where $S$ satisfies some symmetry conditions (see \eqref{sym}) and $h>0$ is a small parameter. We suppose the physical system $(0,L)\times h S$ to be such that the upper layer $(0,L)\times h S^{+}$ is prestrained with a stretching of order $h\chi>0$ in order to match the lower one. We consider the system in its straight configuration $(0,L)\times h S$ to be already at equilibrium under the action of a terminal load $fe_{1}$ applied at $\{L\}\times h S$. Moreover both ends of the beam are kept at a fixed twist by means of a suitable torsional moment (a similar situation is described in \cite{tendril} under the slightly more general assumption that the total twist of the filament is constant). As $h$ tends to $0$, at the onset of instability for the straight configuration, one expects that the mid-fiber of the beam deforms into a new stable configuration showing at least one inversion of curvature. The intrinsic curvature induced by the prestrain would indeed prefer an helical-like configuration which is now forbidden by the boundary conditions. Namely, in order to obtain a helix, one end of the beam must be left free to rotate. \\

We below set the mathematical notation of the problem. Given a small parameter $h>0$, a stripe of thickness $h$, mid-fiber $(0,L)$ and cross section $S\subset\R^2$ is denoted by $\Omega_{h}:=(0,L)\times h S$. On $S$ we will assume that it is a bounded open connected set having unitary area and Lipschitz boundary. We set 
\begin{equation*}
S^{+}:=S\cap\{x_3>0\}.
\end{equation*}
We moreover assume that $S$ satisfies the following symmetry properties:  
\begin{equation}\label{sym}
\begin{split}
\int_Sx_2x_3\,\mathrm{d}x_2\mathrm{d}x_3=&\int_Sx_2\,\mathrm{d}x_2\mathrm{d}x_3=\int_Sx_3\,\mathrm{d}x_2\mathrm{d}x_3=\int_{S^{+}}x_2\,\,\mathrm{d}x_2\mathrm{d}x_3=0.
\end{split}
\end{equation}
 
We consider a hyperelastic material and assume a multiplicative decomposition for the strain (see \cite{LMP1}). Denoting by $W: \mathbb{M}^{3\times 3}\to [0,+\infty]$ the strain energy density, the stored energy of a deformation $u:\Omega_{h}\to \R^3$ is expressed by 
\begin{equation*}
E_{h}(u)=\int_{\Omega_{h}}W(\nabla u(x) \overline {A}_{h}(x))\,\mathrm{d}x.
\end{equation*}
where the prestrain $\overline A_{h}:\Omega_{h}\to\mathbb{M}^{3\times 3}$ is of the form
\begin{equation}\label{twolayer}
\overline A_{h}(x):=\begin{cases}
{\rm{diag}}(1+h\chi,\frac{1}{\sqrt{1+h\chi}},\frac{1}{\sqrt{1+h\chi}}) &\mbox{if $x\in S^+$,}\\
I &\mbox{otherwise,}
\end{cases}
\end{equation}
and $\chi>0$ can be thought of as the effective strength of the stretching.

Throughout the paper we make the following standard assumptions on the density $W$:
\begin{itemize}
	\item[(i)] $W(RF)=W(F)\quad\forall R\in SO(3)\quad$ (frame indifference),
	\item[(ii)] $W(F)\geq c\,\dist^2(F,SO(3))$ and $W(I)=0\quad$ (non-degeneracy),
	\item[(iii)] $W$ is $C^2$ in a neighborhood $U$ of $SO(3)\quad$ (regularity),
	\item[(iv)] $W(FR)=W(F)\quad\forall R\in SO(3)\quad$ (isotropy).
\end{itemize}
In addition to the stored energy we consider an external boundary force that is meant to describe the loading at one end. We set $\Gamma_{h}:=\{L\}\times h S$. Given a force field $\overline{f}_{h}:\Gamma_{h}\to\R^3$ we define the total energy as
\begin{equation*}
\overline{E}_{h}(u)=\int_{\Omega_{h}}W(\nabla u(x)\overline {A}_{h}(x))\,\mathrm{d}x-\int_{\Gamma_{h}}\langle\overline{f}_{h}(x),u(x)\rangle\,\mathrm{d}\mathcal{H}^{2}.
\end{equation*}
As it is customary when dealing with the variational analysis of thin objects, we perform a change of variables to rewrite the energy on a fixed domain. Setting $\Omega=\Omega_1$ and $\Gamma=\Gamma_1$, we define a rescaled deformation field $v:\Omega\to\R^3$, a rescaled prestrain $A_{h}:\Omega\to\mathbb{M}^{3\times 3}$ and a rescaled force $f_{h}:\Gamma\to\R^3$ as
\begin{equation*}
v(x)=u(x_1,h x_2,h x_3),\; A_{h}(x)=\overline {A}_{h}(x_1,h x_2,h x_3),\; f_{h}(x)=\overline{f}_{h}(x_1,h x_2,h x_3).
\end{equation*}
Introducing the rescaled gradient $\nabla_{h}v=\partial_1 v\otimes e_1+\frac{1}{h}(\partial_2 v\otimes e_2+\partial_3v\otimes e_3)$, the energy takes the form $\overline{E}_{h}(u)=h^{2}E_{h}(v)$, where
\begin{equation}\label{defenergy}
E_{h}(v)=
\int_{\Omega}W(\nabla_{h}v(x) A_{h}(x))\,\mathrm{d}x-\int_{\Gamma}\langle f_{h}(x),v(x)\rangle\,\mathrm{d}\mathcal{H}^2.
\end{equation}
We are interested in the case where there exists $f\in\R$ such that
\begin{equation}\label{forcescale}
\frac{f_{h}}{h^2}\rightharpoonup fe_1 \quad\text{in }L^2(\Gamma)
\end{equation}
as $h\to 0$, suggesting a meaningful scaling of $E_{h}$ to be $E_{h}/h^{2}$.
\\
\hspace*{0.5cm}

\noindent On the thin rod we prescribe the following clamped-clamped boundary conditions at both ends, namely 
\begin{equation}\label{clamp}
v(0,x_2,x_3)=
v(L,x_2,x_3)-\int_Sv(L,x_2,x_3)\,\mathrm{d}\mathcal{H}^2=\begin{pmatrix} 0\\ h x_2\\h x_3
\end{pmatrix}.
\end{equation}
To save notation, we introduce the class of admissible deformations as
\begin{equation*}
\mathcal{A}_{h}=\{v\in W^{1,2}(\Omega,\R^3):\;v \text{ satisfies } (\ref{clamp}) \text{ in the sense of traces}\}.
\end{equation*}
In \cite{CRS16} we used $\Gamma$-convergence techniques to derive an effective one-dimensional limit energy when $h\to 0$. It has been proved that, due to (\ref{clamp}), if $R\in W^{1,2}((0,L),SO(3))$ is an $L^{2}$-limit of the rescaled strains $\nabla_{h}v_{h}$, then $R$ has to satisfy the following one-dimensional version of the clamped Dirichlet boundary conditions: 
\begin{equation}\label{clamped}
R(0)=R(L)=I;
\end{equation}
as a consequence we introduce the family of those limiting configurations with finite energy to be defined as  
\begin{equation*}
\mathcal{A}:=\{R\in W^{1,2}((0,L),SO(3)):\;R \text{ satisfies }(\ref{clamped})\text{ in the sense of traces}\}.
\end{equation*}
Note that with this notation the limit deformation $v\in W^{2,2}((0,L),\R^3)$ can be recovered via the formula $v(x_1)=\int_0^{x_1}R(t)e_1\,\mathrm{d}t$. In the setting introduced above, up to an additive constant the limit energy takes the form
\begin{equation}\label{1d-energy}
E^{f}_0(R)=\frac{1}{2}\int_0^Lc_{12}a_{12}(t)^2+c_{13}(a_{13}(t)-k)^2+c_{23}a_{23}(t)^2-2f\langle e_1,R(t)e_1\rangle\,\mathrm{d}t,
\end{equation}
where $A(t)=R^T(t)R^{\prime}(t)$. The constants $c_{12},c_{13},c_{23}$ depend on the coefficients of the quadratic form of linearized elasticity for the energy density $W$ and on the geometry of $S$, while $k$ encodes the intrinsic curvature caused by the two-layer structure of the prestrain (see Proposition 3.8 in \cite{CRS16}). 

We now recall one of the main results of \cite{CRS16} that connects isolated local minimizers of the reduced energy functional to local minimizers of the full three-dimensional model. 
In the statement below local minimality of a deformation $v$ has to be understood in the following sense: we say that $v$ is a local minimizer of $E_{h}$ if there exists $\delta>0$ such that $E_{h}(v)\leq E_{h}(w)$ for all $w$ such that $\|\nabla_{h}v-\nabla_{h} w\|_{L^{2}}\leq \delta$. Note that, as shown in \cite[Proposition 3.3]{CRS16}, such a definition arises naturally on the sublevel sets of $\frac1{h^2} E_{h}$.

\begin{theorem}\label{conv-loc}
Assume that the functional $E_{h}$ defined in (\ref{defenergy}) is lower semicontinuous with respect to weak convergence in $W^{1,2}(\Omega,\R^3)$. Moreover let $E^f_0$ be defined as in \eqref{1d-energy} and $R\in W^{1,2}((0,L),SO(3))$ be a strict local minimizer of $E^f_0$ in ${\mathcal A}$ with respect to the strong $W^{1,2}$-topology. Then there exists a sequence $v_{h}$ of local minimizers of $E_{h}$ in $\mathcal A_{h}$ such that $v_{h}\to v$ strongly in $W^{1,2}(\Omega,\R^3)$ and $\nabla_{h} v_{h}\to R$ strongly in $L^2(\Omega,\mathbb{M}^{3\times 3})$.
\end{theorem}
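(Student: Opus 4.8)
The plan is to run a Kohn--Sternberg type construction (as in \cite{KS}), built on the $\Gamma$-convergence and compactness analysis carried out in \cite{CRS16}. I would first isolate the three ingredients I need from that paper: (a) a \emph{compactness} statement — any sequence $(v_h)\subset\mathcal{A}_h$ with $\sup_h E_h(v_h)<+\infty$ has a subsequence along which $\nabla_h v_h\to \widehat R$ strongly in $L^2(\Omega,\mathbb{M}^{3\times3})$ for some $\widehat R\in\mathcal{A}$, with $v_h\to \widehat v$ strongly in $W^{1,2}$ and $\widehat v(x_1)=\int_0^{x_1}\widehat R(t)e_1\dt$; (b) the $\Gamma$-liminf inequality $E_0^f(\widehat R)\le\liminf_h E_h(v_h)$ along such converging sequences; (c) for the given $R\in\mathcal{A}$, the existence of a recovery sequence $u_h\in\mathcal{A}_h$ with $\nabla_h u_h\to R$ in $L^2$ and $E_h(u_h)\to E_0^f(R)$.

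The core new point is a lemma that upgrades the strict $W^{1,2}$-local minimality of $R$ to a form compatible with the $L^2$-convergence of rescaled gradients: \emph{there exist $\delta_0>0$ and $M>E_0^f(R)$ such that $E_0^f(\widetilde R)>E_0^f(R)$ whenever $\widetilde R\in\mathcal{A}$, $\widetilde R\ne R$, $\|\widetilde R-R\|_{L^2}\le\delta_0$ and $E_0^f(\widetilde R)\le M$.} I would prove this by contradiction: assuming it fails, one gets a sequence $R_n\in\mathcal{A}$ with $R_n\to R$ in $L^2$, $R_n\ne R$, $E_0^f(R_n)\le E_0^f(R)$ and $\sup_n E_0^f(R_n)<+\infty$, and it suffices to show that such a sequence converges to $R$ \emph{strongly in} $W^{1,2}$, contradicting strict $W^{1,2}$-local minimality. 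For this I use the structure of $E_0^f$ in \eqref{1d-energy}: setting $A_n=R_n^TR_n'$, the energy bound and $c_{12},c_{13},c_{23}>0$ force $\|R_n'\|_{L^2}=\|A_n\|_{L^2}$ bounded, so $R_n\rightharpoonup R$ in $W^{1,2}$ and $A_n\rightharpoonup A:=R^TR'$ weakly in $L^2$; since the bending--twisting part of $E_0^f$ is a coercive quadratic form in the entries of $A$ and the remaining terms (those with $k$ and with $f$) are $L^2$-continuous, $E_0^f$ is weakly lower semicontinuous, and together with $\limsup_nE_0^f(R_n)\le E_0^f(R)$ this yields convergence of the quadratic part along the weakly convergent $A_n$; hence $A_n\to A$ strongly in $L^2$ and then $R_n'=R_nA_n\to RA=R'$ strongly in $L^2$.

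With the lemma available, I would fix $\delta\in(0,\delta_0]$ and, for each small $h$, minimize $E_h$ over $\{w\in\mathcal{A}_h:\ \|\nabla_h w-\nabla_h u_h\|_{L^2}\le\delta\}$. This constraint set is convex, bounded in $W^{1,2}(\Omega,\R^3)$ (because $\|\nabla_hw\|_{L^2}$ is bounded and the clamping in \eqref{clamp} fixes a trace) and strongly, hence weakly, closed; so it is weakly compact, and the assumed weak lower semicontinuity of $E_h$ gives a minimizer $v_h$. From $E_h(v_h)\le E_h(u_h)\to E_0^f(R)$ the sequence $(v_h)$ has bounded energy, so by (a)–(b), along subsequences $\nabla_h v_h\to\widetilde R$ in $L^2$ with $\widetilde R\in\mathcal{A}$ and $E_0^f(\widetilde R)\le E_0^f(R)<M$; moreover $\|\widetilde R-R\|_{L^2}=\lim_h\|\nabla_h v_h-\nabla_h u_h\|_{L^2}\le\delta\le\delta_0$, so the lemma forces $\widetilde R=R$, and by a subsequence argument $\nabla_h v_h\to R$ in $L^2$ and $v_h\to v$ in $W^{1,2}$ for the whole sequence. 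If the constraint were active, $\|\nabla_h v_h-\nabla_h u_h\|_{L^2}=\delta$ along a subsequence, passing to the limit would give $\delta=\|R-R\|_{L^2}=0$, absurd; hence for $h$ small $\delta_h:=\delta-\|\nabla_h v_h-\nabla_h u_h\|_{L^2}>0$, and for any $w\in\mathcal{A}_h$ with $\|\nabla_h w-\nabla_h v_h\|_{L^2}\le\delta_h$ the triangle inequality makes $w$ admissible, whence $E_h(v_h)\le E_h(w)$. Thus $v_h$ is a local minimizer of $E_h$ in $\mathcal{A}_h$ in the sense of the statement, which completes the argument.

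The step I expect to be the real obstacle is the lemma: the dimension-reduction machinery only produces strong $L^2$-convergence of the rescaled gradients, whereas strict local minimality of the limiting rod is — and, to be meaningful, must be — a statement in $W^{1,2}$; reconciling the two rests precisely on the coercivity of the bending--twisting quadratic form in \eqref{1d-energy} and on the weak-lower-semicontinuity-plus-convergence-of-energies argument above. Everything else is a rather standard adaptation of \cite{KS}, the only other point deserving care being the well-posedness of the constrained minimization, where the weak lower semicontinuity hypothesis on $E_h$ is used.
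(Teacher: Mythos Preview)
The present paper does not contain a proof of this theorem: it is explicitly introduced as a result \emph{recalled} from \cite{CRS16} (see the sentence preceding the statement and the discussion in the introduction, where the authors write that it was proved ``along the lines of a result by Kohn and Sternberg \cite{KS}''). So there is no proof here to compare against; your proposal is, in effect, a reconstruction of the argument in \cite{CRS16}.

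That said, your reconstruction is correct and is precisely the Kohn--Sternberg scheme the authors refer to. The key additional lemma you isolate --- upgrading strict $W^{1,2}$-local minimality of $R$ to an $L^2$-neighborhood statement on energy sublevel sets, via the coercivity of the bending--twisting quadratic form and the ``weak lower semicontinuity plus convergence of energies implies strong convergence'' trick --- is exactly the bridge needed between the compactness topology and the minimality topology, and your proof of it is sound. The constrained minimization step and the argument that the constraint is eventually inactive are also standard and correctly executed.

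One small point of care: the $\Gamma$-convergence and compactness statements in \cite{CRS16} are for the \emph{rescaled} functionals $h^{-2}E_h$, not for $E_h$ itself (cf.\ the remark after \eqref{forcescale}). So when you write ``$\sup_h E_h(v_h)<+\infty$'' and ``$E_h(u_h)\to E_0^f(R)$'' you should have $h^{-2}E_h$ in place of $E_h$ throughout; this does not affect the notion of local minimizer (which is scale invariant) nor the logic of the argument, but it is worth getting the bookkeeping right.
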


In \cite{CRS16} we also analyzed the local minimality of the straight configuration for the limit model \eqref{1d-energy}. Here we recall also this result under the additional assumption that the following inequality holds:
\begin{align}\label{ass_0}
\frac{(c_{13}k)^2}{c_{23}}-\frac{4\pi^2c_{12}}{L^2}>0.
\end{align}
Such an inequality, which we assume to be true throughout the paper, implies that the critical force, at which local minimality of the straight configuration gets lost, is positive (a natural condition in the experiment described at the beginning of the section).
\begin{theorem}[\cite{CRS16}, Theorem 5.2]\label{idstable?}
Let $E_{0}^{f}$ be as in \eqref{1d-energy}, set $f_{crit}=\frac{(c_{13}k)^2}{c_{23}}-\frac{4\pi^2c_{12}}{L^2}$ and assume \eqref{ass_0}. Then for $f>f_{crit}$ the straight configuration $R(t)\equiv I$ is a strict local minimizer of $E^{f}_{0}$ in the $L^{2}$-topology with the boundary conditions \eqref{clamped}.
If instead $f<f_{crit}$ the straight configuration is not a local minimizer.
\end{theorem}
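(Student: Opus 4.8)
The plan is to reduce strict local minimality of $R\equiv I$ to a sign analysis of the second variation of $E_0^f$, and then to bridge the gap between the $L^2$- and $W^{1,2}$-topologies by a compactness argument tailored to the structure of \eqref{1d-energy}. I would parametrize configurations near the straight one through the exponential chart: for $\|R-I\|_{W^{1,2}}$ small there is a unique small $W\in W^{1,2}((0,L),\mathbb{M}^{3\times3}_{skew})$ with $R=\exp W$ and $W(0)=W(L)=0$, the latter being precisely the clamping \eqref{clamped}, and $W\mapsto\exp W$ is bi-Lipschitz between such neighbourhoods. Writing $w_{12},w_{13},w_{23}$ for the entries of $W$ and expanding $A=R^TR'=W'-\tfrac12[W,W']+\dots$ and $\langle e_1,Re_1\rangle=1-\tfrac12(w_{12}^2+w_{13}^2)+\dots$, the first variation at $I$ equals $-c_{13}k\int_0^Lw_{13}'\,\dt=-c_{13}k(w_{13}(L)-w_{13}(0))=0$, so the clamping makes $I$ a critical point (without it the intrinsic curvature $k$ would prevent this); an integration by parts using $W(0)=W(L)=0$ then gives for the quadratic term
\begin{equation*}
\delta^2E(W)=\tfrac12\int_0^Lc_{12}(w_{12}')^2+c_{13}(w_{13}')^2+c_{23}(w_{23}')^2+f(w_{12}^2+w_{13}^2)-2c_{13}k\,w_{12}'w_{23}\,\dt .
\end{equation*}

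Next I would determine the sign of $\delta^2E$. Since $f>f_{crit}>0$ by \eqref{ass_0}, the $w_{13}$-part $c_{13}(w_{13}')^2+fw_{13}^2$ decouples with a coercive nonnegative contribution. In the $(w_{12},w_{23})$-block I minimize out $w_{23}\in H^1_0(0,L)$, a strictly convex quadratic whose Euler--Lagrange equation $c_{23}w_{23}''=-c_{13}k\,w_{12}'$ is solved explicitly, obtaining the reduced functional
\begin{equation*}
\tilde Q(w_{12})=\int_0^Lc_{12}(w_{12}')^2+fw_{12}^2\,\dt-\frac{(c_{13}k)^2}{c_{23}}\Big(\int_0^Lw_{12}^2\,\dt-\frac1L\Big(\int_0^Lw_{12}\,\dt\Big)^2\Big)
\end{equation*}
on $H^1_0(0,L)$; $I$ is a strict $W^{1,2}$-local minimizer iff $\tilde Q$ is coercive. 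Writing $G:=\tilde Q-f\|\cdot\|_{L^2}^2$, one must show $\inf_{w\ne0}G(w)/\|w\|_{L^2}^2=-f_{crit}$: the inequality $\le$ comes from $w=\sin(2\pi t/L)$ (which has $\int_0^Lw\,\dt=0$), while $\ge$ reduces, after dividing through, to the Poincar\'e-type inequality
\begin{equation*}
\|w'\|_{L^2}^2+\kappa\Big(\int_0^Lw\,\dt\Big)^2\ge\frac{4\pi^2}{L^2}\|w\|_{L^2}^2,\qquad\kappa:=\frac{(c_{13}k)^2}{c_{12}c_{23}L},\quad\kappa L^3>4\pi^2,
\end{equation*}
for all $w\in H^1_0(0,L)$ — the strict bound on $\kappa$ being exactly \eqref{ass_0}, the only place it is used.

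To prove this inequality I would examine the constrained problem on $\{\|w\|_{L^2}=1\}$: its Euler--Lagrange equation is $-w''+\kappa\int_0^Lw=\mu w$ with $w(0)=w(L)=0$, whose eigenfunctions are trigonometric in $\sqrt\mu$ plus a constant. On the branch $\int_0^Lw=0$ the consistency condition leaves only $\sin(n\pi t/L)$ with $n$ even, the smallest giving $\mu=4\pi^2/L^2$; on the branch $\int_0^Lw\ne0$ a short computation reduces the eigenvalue condition to $s^3=(\kappa L^3)\big(s-2\tan(s/2)\big)$ with $s=\sqrt\mu\,L$, and an elementary sign check excludes any root $s<2\pi$ (on $(0,\pi)$ the right-hand side is negative; on $(\pi,2\pi)$, $\tan(s/2)<0\le s(4\pi^2-s^2)/(8\pi^2)$, hence $s^3<\kappa L^3(s-2\tan(s/2))$ whenever $\kappa L^3\ge4\pi^2$), while $\mu\le0$ gives no eigenfunction. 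Thus the infimum equals $4\pi^2/L^2$, so $\inf G/\|w\|_{L^2}^2=-f_{crit}$; for $f>f_{crit}$ we get $\tilde Q(w)\ge(f-f_{crit})\|w\|_{L^2}^2>0$ for $w\ne0$, and combining this $L^2$-gap with the explicit $\|w_{12}'\|_{L^2}$- and $\|w_{23}'\|_{L^2}$-dependence in $\tilde Q$ and $\delta^2E$ upgrades it to coercivity $\delta^2E(W)\ge c_\ast\|W\|_{H^1_0}^2$.

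To conclude: for $f>f_{crit}$, the map $W\mapsto E_0^f(\exp W)$ is smooth with Taylor remainder beyond order two of size $o(\|W\|_{H^1_0}^2)$ — each higher term of the polynomial expression \eqref{1d-energy} carries an extra factor $\|W\|_{L^\infty}\lesssim\|W\|_{H^1_0}$ — so coercivity of $\delta^2E$ makes $I$ a strict local minimizer in the $W^{1,2}$-topology. Passing to the $L^2$-topology: if $R_n\to I$ in $L^2$, $R_n\ne I$, with $E_0^f(R_n)\le E_0^f(I)$, the energy bound and $\langle e_1,R_ne_1\rangle\le1$ force $\|A_n\|_{L^2}$ bounded ($A_n:=R_n^TR_n'$), hence $R_n\rightharpoonup I$ in $W^{1,2}$ and $R_n\to I$ uniformly; lower semicontinuity then yields $E_0^f(R_n)\to E_0^f(I)$, and since $\langle e_1,R_ne_1\rangle\to1$ uniformly and $\int_0^La_{13}^{(n)}\,\dt\to0$ this forces $\|A_n\|_{L^2}\to0$, i.e.\ $R_n\to I$ strongly in $W^{1,2}$, contradicting strict $W^{1,2}$-minimality. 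For $f<f_{crit}$: with $W^\ast$ given by $w_{12}=\sin(2\pi t/L)$, $w_{23}$ the corresponding minimizer above, $w_{13}=0$, one has $\delta^2E(W^\ast)=\tfrac12(f-f_{crit})\|\sin(2\pi t/L)\|_{L^2}^2<0$, so $E_0^f(\exp(\varepsilon W^\ast))<E_0^f(I)$ for $\varepsilon>0$ small while $\exp(\varepsilon W^\ast)\to I$ in $W^{1,2}$, hence in $L^2$; thus $I$ is not a local minimizer. I expect the main obstacle to be the eigenvalue analysis for $\tilde Q$ — especially ruling out spurious small eigenvalues on the branch $\int_0^Lw\ne0$, where the transcendental condition must be controlled right up to the threshold — together with the bookkeeping that keeps the Taylor remainder $o(\|W\|_{H^1_0}^2)$ and the $L^2\!\to\!W^{1,2}$ compactness reduction.
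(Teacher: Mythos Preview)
The paper does not contain a proof of this statement: it is quoted verbatim from \cite{CRS16} as Theorem~5.2 there, so there is no ``paper's own proof'' to compare against. Your proposal must therefore be judged on its own merits.

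Your argument is correct and complete in outline. The second-variation formula you obtain via the exponential chart agrees (under the obvious identification $w_{12}\leftrightarrow -\varphi_3$, $w_{13}\leftrightarrow \varphi_2$, $w_{23}\leftrightarrow -\varphi_1$) with the one the present paper records in Cardan angles in the proof of Theorem~\ref{localopt}. The reduction by minimising out $w_{23}$ is clean: the Euler--Lagrange equation integrates once, the integration constant is fixed by the zero-mean condition on $w_{23}'$, and the reduced quadratic form $\tilde Q$ is exactly what you write. Your transcendental equation $s^3=\kappa L^3(s-2\tan(s/2))$ is correct, and the exclusion of roots in $(0,2\pi)$ is valid: on $(0,\pi)$ the right-hand side is negative; on $(\pi,2\pi)$ the inequality $s^3<4\pi^2\bigl(s-2\tan(s/2)\bigr)$ is equivalent to $\tan(s/2)<s(4\pi^2-s^2)/(8\pi^2)$, and the right-hand side is nonnegative there while $\tan(s/2)<0$; hence $s^3<\kappa L^3(s-2\tan(s/2))$ as soon as $\kappa L^3\ge 4\pi^2$. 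The upgrade from an $L^2$-gap to $H^1$-coercivity is the standard interpolation (split $\tilde Q=\theta\tilde Q+(1-\theta)\tilde Q$ and absorb the negative zero-order term for small $\theta$), and the $L^2\to W^{1,2}$ compactness reduction is exactly right: weak $W^{1,2}$-convergence of $R_n$ to $I$ gives $\int a_{13}^{(n)}\to 0$ and $\langle e_1,R_ne_1\rangle\to 1$ uniformly, so the energy identity forces $\|A_n\|_{L^2}\to 0$.

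Compared to the approach implicit in this paper (and presumably in \cite{CRS16}), the only genuine methodological difference is your elimination of $w_{23}$ to reduce to a \emph{scalar} nonlocal eigenvalue problem, rather than analysing the coupled $4\times 4$ first-order system directly (as in Lemma~\ref{bifurcation}). This buys you a one-line identification of the threshold via the explicit Poincar\'e-type inequality, at the cost of the transcendental side computation; the paper's route instead diagonalises the constant-coefficient system. Both are equivalent, and your chart (exponential map) versus the paper's (Cardan angles) is immaterial at second order.
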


\subsection{Abstract bifurcation results}
We now recall two abstract results concerning existence and stability of bifurcation branches, that are fundamental to our analysis. 
The first one is an existence result proved in \cite{CrRa} which we state below:
\begin{theorem}[Crandall-Rabinowitz, 1971]\label{CR}
Let $X,Y$ be Banach spaces, $V$ a neighbourhood of $(0,\lambda_0)$ in $X\times\R$ and $F:V\to Y$ have the properties
	
\begin{itemize}
	\item[(i)] $F(0,\lambda)=0$,
	\item[(ii)] the partial derivatives $D_{\varphi}F,D_{\lambda}F,D^2_{\varphi,\lambda}F$ exist and are continuous,
	\item[(iii)] $D_{\varphi}F(0,\lambda_0)$ is a Fredholm operator with zero index and $N(D_{\varphi}F(0,\lambda_0))=\text{span}\{v\}$,
	\item[(iv)] $D^2_{\varphi,\lambda}F(0,\lambda_0)v\notin Rg(D_{\varphi}F(0,\lambda_0))$.
\end{itemize}
If $Z$ is any complement of $N(D_{\varphi}F(0,\lambda_0))$ in $X$, then there is a neighbourhood $U$ of $(0,\lambda_0)$ in $X\times \R$, an interval $(-a,a)$ and continuous functions $\lambda:(-\delta,\delta)\to\R$ and $\psi:(-\delta,\delta)\to Z$ such that $\lambda(0)=0,\,\psi(0)=0$ and
\begin{equation}\label{intorno}
F^{-1}(0)\cap U=\{(sv+s\psi(s),\lambda_0+\lambda(s)):\;|s|<\delta\}\cup \{(0,s):\;(0,s)\in U\}.
\end{equation}
If $F\in C^n(V)$, then $\psi, \lambda,\in C^{n-1}((-\delta,\delta))$.
\end{theorem}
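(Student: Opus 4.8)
The plan is a Lyapunov--Schmidt reduction combined with two applications of the implicit function theorem. Write $L_0 := D_{\varphi}F(0,\lambda_0)$. By (iii), $L_0$ is Fredholm of index zero with one-dimensional kernel, so $Y_1 := Rg(L_0)$ is closed of codimension one; fix a one-dimensional complement $Y_2$ with $Y = Y_1 \oplus Y_2$, a continuous projection $Q\colon Y\to Y_1$ along $Y_2$, and $\ell\in Y^{*}$ with $N(\ell)=Y_1$. Choosing $Z$ as in the statement, every $\varphi$ near $0$ is written uniquely as $\varphi = sv+z$ with $s\in\R$, $z\in Z$, and $F(sv+z,\lambda)=0$ splits into the \emph{range equation} $QF(sv+z,\lambda)=0$ and the scalar \emph{bifurcation equation} $\ell\bigl(F(sv+z,\lambda)\bigr)=0$.

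First I would solve the range equation. The map $(s,z,\lambda)\mapsto QF(sv+z,\lambda)$ is $C^{1}$ by (ii), vanishes at $(0,0,\lambda_0)$, and its $z$-differential there is $Q\,L_0|_{Z}\colon Z\to Y_1$, an isomorphism since $L_0$ restricts to a bijection of $Z$ onto $Y_1$. The implicit function theorem produces a $C^{1}$ map $z=z(s,\lambda)$ near $(0,\lambda_0)$ with $z(0,\lambda_0)=0$ solving it uniquely; by (i) and uniqueness, $z(0,\lambda)=0$ for all $\lambda$, so $z(s,\lambda)=s\,\zeta(s,\lambda)$ with $\zeta$ continuous. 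Substituting yields a scalar function $g(s,\lambda):=\ell\bigl(F(sv+z(s,\lambda),\lambda)\bigr)$ with $g(0,\lambda)\equiv0$, hence $g(s,\lambda)=s\,h(s,\lambda)$ for a continuous $h$. Since $z(0,\lambda)=0$, one gets $h(0,\lambda)=\ell\bigl(D_{\varphi}F(0,\lambda)(v+\partial_{s}z(0,\lambda))\bigr)$; differentiating the range equation in $s$ at $s=0$ gives $\partial_{s}z(0,\lambda)=-\bigl(Q D_{\varphi}F(0,\lambda)|_{Z}\bigr)^{-1}Q D_{\varphi}F(0,\lambda)v$, so that $\partial_{s}z(0,\lambda_0)=0$ because $L_0 v=0$, whence $h(0,\lambda_0)=\ell(L_0 v)=0$. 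The same explicit formula shows $\lambda\mapsto\partial_{s}z(0,\lambda)$ is $C^{1}$ (even though $D^{2}_{\varphi\varphi}F$ is not assumed), so $h(0,\cdot)$ is differentiable and
\[
\partial_{\lambda}h(0,\lambda_0)=\ell\bigl(D^{2}_{\varphi\lambda}F(0,\lambda_0)v\bigr)+\ell\bigl(L_0\,\partial^{2}_{\lambda s}z(0,\lambda_0)\bigr)=\ell\bigl(D^{2}_{\varphi\lambda}F(0,\lambda_0)v\bigr)\neq0,
\]
the middle term vanishing because $Rg(L_0)=N(\ell)$ and the inequality being precisely hypothesis (iv). An implicit function theorem applied to $h(s,\lambda)=0$ — in the form requiring only continuity of $h$ together with existence and continuity of $\partial_{\lambda}h$ — then yields a continuous $\lambda(s)$ with $\lambda(0)=0$ and $h(s,\lambda_0+\lambda(s))=0$, unique near $(0,\lambda_0)$. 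Setting $\psi(s):=\zeta(s,\lambda_0+\lambda(s))\in Z$ (so $\psi(0)=0$ and $z(s,\lambda_0+\lambda(s))=s\psi(s)$), the description \eqref{intorno} follows by combining the two uniqueness statements: near $(0,\lambda_0)$ any zero of $F$ has $z=z(s,\lambda)$ forced, hence $s\,h(s,\lambda)=0$, i.e. $s=0$ or $\lambda=\lambda_0+\lambda(s)$. If $F\in C^{n}(V)$, then $z\in C^{n}$, $h\in C^{n-1}$ (dividing by $s$ costs one derivative), and $\lambda,\psi\in C^{n-1}$.

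The main obstacle is the asymmetry in (ii): only $D_{\varphi}F$, $D_{\lambda}F$ and the mixed $D^{2}_{\varphi\lambda}F$ are available, not $D^{2}_{\varphi\varphi}F$, so neither $g$ nor the reduced map $h$ is a priori $C^{1}$ in the variable $s$. This is circumvented by factoring out the trivial branch \emph{before} differentiating, so that the transversality computation only ever involves first $s$-derivatives and the mixed second derivative; by exploiting the explicit formula for $\partial_{s}z(0,\cdot)$, which is $C^{1}$ in $\lambda$ despite the missing $\varphi\varphi$-derivative and which vanishes at $\lambda_0$ because $v\in N(L_0)$; and by using the one-sided implicit function theorem for $h(s,\lambda)=0$, in which smoothness in the parameter $s$ is not needed. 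The identity $Rg(L_0)=N(\ell)$ is what collapses the transversality condition onto exactly hypothesis (iv).
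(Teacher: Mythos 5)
The paper does not prove this statement: it is quoted verbatim from Crandall--Rabinowitz (1971), cited as \cite{CrRa}, so there is no internal proof to compare against. Your argument is a two-step Lyapunov--Schmidt reduction (first solve the range equation for $z=z(s,\lambda)$ by an implicit function theorem, then factor out $s$ from the scalar bifurcation equation and solve $h(s,\lambda)=0$ for $\lambda$). Crandall and Rabinowitz instead work with a single auxiliary map, namely $G(s,z,\lambda):=s^{-1}F(sv+sz,\lambda)$ extended by $G(0,z,\lambda)=D_\varphi F(0,\lambda)(v+z)$, and solve $G(s,z,\lambda)=0$ for the pair $(z,\lambda)\in Z\times\R$ simultaneously by one application of the (non-smooth) implicit function theorem. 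The advantage of their route is that $D_zG=D_\varphi F(sv+sz,\lambda)$ and $D_\lambda G$ involve only the derivatives guaranteed by hypothesis (ii), so the required regularity is manifest; the invertibility of $D_{(z,\lambda)}G(0,0,\lambda_0)$ follows from (iii) and (iv), and \eqref{intorno} together with the $C^{n-1}$-regularity drop out at once.

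There is a genuine gap in your step applying the non-smooth implicit function theorem to $h(s,\lambda)=0$. The version you invoke needs $\partial_\lambda h$ to exist and be continuous on a full two-sided neighbourhood of $(0,\lambda_0)$, but you only verify that $\lambda\mapsto h(0,\lambda)$ is differentiable at $\lambda_0$ and compute $\partial_\lambda h(0,\lambda_0)$. For $s\neq 0$ one has
$\partial_\lambda h(s,\lambda)=s^{-1}\ell\bigl[D_\varphi F(sv+z,\lambda)\,\partial_\lambda z(s,\lambda)+D_\lambda F(sv+z,\lambda)\bigr]$,
and it is not immediate that this extends continuously across $s=0$: you must use $F(0,\lambda)\equiv 0$ (hence $D_\lambda F(0,\lambda)=0$) to write
$s^{-1}D_\lambda F(sv+z,\lambda)=\int_0^1 D^2_{\varphi,\lambda}F\bigl(rs(v+\zeta),\lambda\bigr)(v+\zeta)\,\mathrm{d}r$,
which is jointly continuous by (ii), and likewise $\partial_\lambda z=-(QD_\varphi F|_Z)^{-1}QD_\lambda F$ gives joint continuity of $s^{-1}\partial_\lambda z$. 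Only with this in hand does $\partial_\lambda h$ exist and extend continuously to $s=0$, so that the non-smooth implicit function theorem applies. As written, the proposal asserts the conclusion of this computation without carrying it out, and this is precisely the delicate point that the weakened hypothesis (ii) (no $D^2_{\varphi\varphi}F$) makes non-trivial; the one-step formulation of Crandall--Rabinowitz is designed exactly to sidestep it.
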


In order to state the second one, proved in \cite[Corollary 1.13 and Theorem 1.16]{CrRa73}, we need to recall the following Definition.

\begin{definition}\label{simple}
Let $X,Y$ be Banach spaces and let $T,K\in L(X,Y)$ be bounded, linear operators. Then $\mu\in\R$ is called a $K$-simple eigenvalue of $T$ if ${\rm dim}(N(T-\mu K))={\rm codim}(Rg(T-\mu K))=1$ and, if $N(T-\mu K)={\rm span}\{v_0\}$, then $Kv_0\notin Rg(T-\mu K)$.
\end{definition}

\begin{theorem}[Crandall-Rabinowitz, 1973]\label{CRII}
In the setting of Theorem \ref{CR}, let $K\in L(X,Y)$ and assume that $0$ is a $K$-simple eigenvalue of $D_{\varphi}F(0,\lambda_0)$. Then there exist open intervals $A,B\subset\R$ such that $\lambda_0\in A$ and $0\in B$ and continuously differentiable functions $\gamma:A\to\R$, $\mu:B\to\R$, $u:A\to X$ and $w:B\to X$ such that
\begin{align}
&D_{\varphi}F(0,\lambda)u(\lambda)=\gamma(\lambda)Ku(\lambda)&\text{ for all }\lambda\in A\nonumber,
\\
&D_{\varphi}F(sv_0+s\psi(s),\lambda_0+\lambda(s))w(s)=\mu(s)Kw(s)&\text{ for all }s\in B.\label{eigenprob}
\end{align}
It holds that $\gamma(\lambda_0)=\mu(0)=0$, $u(\lambda_0)=w(0)=v_0$ and $u(\lambda)-v_0\in Z$ as well as $w(s)-v_0\in Z$.
\\
Moreover we have $\gamma^{\prime}(\lambda_0)\neq 0$ and for $|s|$ small enough the functions $\mu(s)$ and $-s\lambda^{\prime}(s)\gamma^{\prime}(\lambda_0)$ have the same zeros and the same sign in the sense that
\begin{equation*}
\lim_{\substack{s\to 0\\ \mu(s)\neq 0}}\frac{-s\lambda^{\prime}(s)\gamma^{\prime}(\lambda_0)}{\mu(s)}=1.
\end{equation*}	
\end{theorem}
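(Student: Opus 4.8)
The plan is to pass to a finite-dimensional problem by a Lyapunov--Schmidt construction and then to exploit the classical \emph{principle of exchange of stability}: the perturbed eigenvalue of $D_\varphi F$ at a base point close to the solution set is, to leading order, a fixed nonzero multiple of the $s$-derivative of the reduced bifurcation function. First I would perturb the eigenvalue $0$. Set $T_0:=D_\varphi F(0,\lambda_0)$, $N(T_0)=\mathrm{span}\{v_0\}$, keep the complement $Z$ from Theorem \ref{CR}, fix a one-dimensional complement $W=\mathrm{span}\{y_0\}$ of the closed subspace $Rg(T_0)$ in $Y$ with projection $Q:Y\to Rg(T_0)$, and let $\ell\in Y^*$ be normalized by $N(\ell)=Rg(T_0)$, $\ell(y_0)=1$. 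For a base point $p$ near $(0,\lambda_0)$, an eigenpair $(w,\mu)$ of $D_\varphi F(p)$ with $w$ near $v_0$ may be normalized so that $w=v_0+z$ with $z\in Z$; splitting $D_\varphi F(p)(v_0+z)=\mu K(v_0+z)$ via $Q$ and $I-Q$ gives an equation valued in $Rg(T_0)$ whose $z$-derivative at $(0,0,(0,\lambda_0))$ is the isomorphism $T_0|_Z:Z\to Rg(T_0)$, together with one scalar equation $g(\mu,p)=0$. The implicit function theorem first produces $z=z(\mu,p)\in Z$ with $z(0,(0,\lambda_0))=0$, and then, since $\partial_\mu g(0,(0,\lambda_0))=-\ell(Kv_0)\neq0$ (because $Kv_0\notin Rg(T_0)$), produces $\mu=\mu(p)$ with $\mu(0,\lambda_0)=0$; putting $w(p):=v_0+z(\mu(p),p)$ gives the unique eigenpair of $D_\varphi F(p)$ with $\mu$ near $0$ and $w$ near $v_0$, continuously differentiable in $p$ provided $F$ is smooth enough. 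Restricting $p$ to the trivial branch $\{(0,\lambda)\}$ and to the bifurcating branch $(sv_0+s\psi(s),\lambda_0+\lambda(s))$ of Theorem \ref{CR} then yields $\gamma(\lambda):=\mu(0,\lambda)$, $u(\lambda):=w(0,\lambda)$ and $\mu(s),w(s)$, with all the asserted normalizations and the conditions $u(\lambda)-v_0,\,w(s)-v_0\in Z$.

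To see $\gamma'(\lambda_0)\neq0$, differentiate $D_\varphi F(0,\lambda)u(\lambda)=\gamma(\lambda)Ku(\lambda)$ at $\lambda_0$; using $u(\lambda_0)=v_0$, $\gamma(\lambda_0)=0$ this gives $D^2_{\varphi,\lambda}F(0,\lambda_0)v_0+T_0u'(\lambda_0)=\gamma'(\lambda_0)Kv_0$, and applying $\ell$ (which kills $T_0u'(\lambda_0)$) yields $\ell\big(D^2_{\varphi,\lambda}F(0,\lambda_0)v_0\big)=\gamma'(\lambda_0)\ell(Kv_0)$. The left-hand side is nonzero by hypothesis (iv) of Theorem \ref{CR} (which says $D^2_{\varphi,\lambda}F(0,\lambda_0)v_0\notin Rg(T_0)=N(\ell)$), and $\ell(Kv_0)\neq0$, hence $\gamma'(\lambda_0)\neq0$; this identity will be reused below.

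For the comparison between $\mu(s)$ and $-s\lambda'(s)\gamma'(\lambda_0)$ I would also run the Lyapunov--Schmidt reduction for $F$ itself. Writing $\varphi=sv_0+\zeta$ with $\zeta\in Z$, solving $QF(sv_0+\zeta,\lambda)=0$ for $\zeta=\zeta(s,\lambda)$, and setting $\Phi(s,\lambda):=\ell\big(F(sv_0+\zeta(s,\lambda),\lambda)\big)$, one has $\Phi(0,\lambda)=0$, hence $\Phi(s,\lambda)=s\Psi(s,\lambda)$, and the bifurcating branch is exactly $\Psi(s,\lambda_0+\lambda(s))=0$, with $\partial_\lambda\Psi(0,\lambda_0)\neq0$. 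Differentiating $QF(sv_0+\zeta(s,\lambda),\lambda)=0$ in $s$ shows that $v_0+\partial_s\zeta$ spans the one-dimensional kernel of $Q\,D_\varphi F$ at the base point, so it equals the eigenfunction $w$ when $\mu=0$ and differs from it by $O(\mu)$ otherwise; applying $\ell$ to $D_\varphi F\,w=\mu Kw$ and using that $\ell\circ D_\varphi F|_Z\to0$ as the base point tends to $(0,\lambda_0)$ gives the exchange-of-stability identity
\[
\partial_s\Phi=\ell\big(D_\varphi F\,(v_0+\partial_s\zeta)\big)=\mu\,\ell(Kv_0)+o(\mu),
\]
i.e.\ $\mu=c\,\partial_s\Phi\,(1+o(1))$ with $c:=\ell(Kv_0)^{-1}\neq0$, uniformly near $(0,\lambda_0)$. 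On the trivial branch this reads $\gamma(\lambda)=c\,\Psi(0,\lambda)(1+o(1))$, so $\gamma'(\lambda_0)=c\,\partial_\lambda\Psi(0,\lambda_0)$; on the bifurcating branch, using $\partial_s\Phi=\Psi+s\partial_s\Psi$ and $\Psi(s,\lambda_0+\lambda(s))=0$, it reads $\mu(s)=c\,s\,\partial_s\Psi(s,\lambda_0+\lambda(s))(1+o(1))$; finally, differentiating $\Psi(s,\lambda_0+\lambda(s))=0$ in $s$ gives $\partial_s\Psi(s,\lambda_0+\lambda(s))=-\lambda'(s)\partial_\lambda\Psi(s,\lambda_0+\lambda(s))=-c^{-1}\lambda'(s)\gamma'(\lambda_0)+o(\lambda'(s))$. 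Substituting yields $\mu(s)=-s\lambda'(s)\gamma'(\lambda_0)+o\big(s\lambda'(s)\big)$, i.e.\ $\mu(s)=-s\lambda'(s)\gamma'(\lambda_0)(1+o(1))$, which is the asserted limit and shows that $\mu(s)$ and $-s\lambda'(s)\gamma'(\lambda_0)$ have the same zeros and the same sign for $|s|$ small.

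The main obstacle is making the exchange-of-stability identity rigorous and \emph{uniform}: one must simultaneously control the $O(\mu)$ gap between the eigenfunction $w$ and $v_0+\partial_s\zeta$ and the size of $\ell\circ D_\varphi F|_Z$, so that the remainder is truly $o(\mu)$ and not merely $o(|s|)$. This sharpening is essential, since $-s\lambda'(s)\gamma'(\lambda_0)$ may itself be of order $o(|s|)$ when the branch is of degenerate pitchfork type with $\lambda'(s)\to0$, in which case an error controlled only by $|s|$ would be worthless. A secondary but genuine issue is regularity bookkeeping: one needs $F$ smooth enough --- beyond the bare hypotheses of Theorem \ref{CR} --- for $\lambda(s)$ to be $C^1$, for the second differentials used here to exist, and for each implicit-function step to deliver $C^1$ dependence, which is the same tension between smooth and merely Sobolev-regular settings that the authors highlight for their application.
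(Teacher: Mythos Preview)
The paper does not prove this statement at all: Theorem~\ref{CRII} is quoted verbatim from the literature, with the attribution ``proved in \cite[Corollary 1.13 and Theorem 1.16]{CrRa73}'', and is used as a black box in the proof of Theorem~\ref{localopt}. There is therefore no paper proof to compare your proposal against.

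That said, your sketch is a faithful outline of the original Crandall--Rabinowitz argument. The construction of the eigenpair $(\mu(p),w(p))$ via the implicit function theorem is precisely the content of Lemma~\ref{quasiuniqueness} (which the paper also cites from \cite{CrRa73}), and your computation of $\gamma'(\lambda_0)$ by differentiating the eigenvalue relation and applying the functional $\ell$ is the standard one. Your identification of the main obstacle --- obtaining a remainder that is genuinely $o(\mu)$ rather than $o(|s|)$ in the exchange-of-stability step --- is exactly right, and is what makes the original proof nontrivial; Crandall and Rabinowitz handle this by a careful factorization argument (their Lemma~1.8 and the proof of Theorem~1.16). Your remark on regularity bookkeeping is also well taken: the $C^1$ conclusion for $\lambda(s)$ and the existence of the needed second differentials require $F\in C^2$, which is stronger than the minimal hypotheses of Theorem~\ref{CR} but is available in the paper's application via Lemma~\ref{diff}.
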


The curve $(\mu(s), w(s))$ satisfying the eigenvalue problem \eqref{eigenprob} is locally uniquely determined by the operator, in a sense made precise by the following Lemma, also proved in \cite{CrRa73}.
\begin{lemma}[\cite{CrRa73}, Lemma 1.3]\label{quasiuniqueness}
Let $T_0,K\in L(X,Y)$ be bounded, linear operators and $\mu_0$ be a $K$-simple eigenvalue of $T_0$, with $N(T_0-\mu_0 K)={\rm span}\{v_0\}$. Then there exist a neigborhood $U_1$ of $T_0$ in $ L(X,Y)$ and a neighborhood $U_2$ of $\mu_0$ in $\R$ such that, for all $T\in U_1$, $T-\mu K$ is singular for a unique $\mu \in U_2$. Furthermore, $\mu$ depends smoothly on $T$ and is itself a $K$-simple eigenvalue.\\
If $Z$ is a complement of  ${\rm span}\{v_0\}$ in $X$ , given $(T, \mu) \in U_1\times U_2$ such that $T-\mu K$ is singular, there exists a unique vector $w$ such that $w-v_0 \in Z$ and
\[
T w= \mu Kw\,.
\]
Also $w$ is a smooth function of $T$.
\end{lemma}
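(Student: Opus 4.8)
The plan is to reduce the statement to two applications of the implicit function theorem by means of a bordered (Lyapunov--Schmidt) operator. First note that, since ${\rm dim}\,N(T_0-\mu_0 K)={\rm codim}\,Rg(T_0-\mu_0 K)=1$, the operator $T_0-\mu_0 K$ is Fredholm of index zero, and the same is true for $T-\mu K$ whenever $(T,\mu)$ is close to $(T_0,\mu_0)$; in particular, for such $(T,\mu)$, singularity of $T-\mu K$ is equivalent to non-injectivity. Fix the complement $Z$ of ${\rm span}\{v_0\}$, let $\ell\in X^*$ be the coordinate functional with $\ell(v_0)=1$ and $\ell|_Z=0$, and, using $Kv_0\notin Rg(T_0-\mu_0 K)$ and ${\rm codim}\,Rg(T_0-\mu_0 K)=1$, choose $g\in Y^*$ with $g(Kv_0)=1$ and $g\equiv0$ on $Rg(T_0-\mu_0 K)$. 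Introduce
\[
\mathcal B(T,\mu)\colon X\times\R\to Y\times\R,\qquad \mathcal B(T,\mu)(w,s):=\bigl((T-\mu K)w+s\,Kv_0,\ \ell(w)\bigr).
\]

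I would first check that $\mathcal B(T_0,\mu_0)$ is an isomorphism: injectivity follows by applying $g$ to the first component (which forces $s=0$, hence $w\in{\rm span}\{v_0\}$, hence $w=0$ since $\ell(w)=0$), and surjectivity from the fact that $T_0-\mu_0 K$ maps $Z$ isomorphically onto $Rg(T_0-\mu_0 K)$ (open mapping theorem) while $Y={\rm span}\{Kv_0\}\oplus Rg(T_0-\mu_0 K)$. Since the invertible operators form an open subset of $L(X\times\R,Y\times\R)$ and $(T,\mu)\mapsto\mathcal B(T,\mu)$ is affine, $\mathcal B(T,\mu)$ stays an isomorphism with inverse analytic in $(T,\mu)$ on a neighbourhood of $(T_0,\mu_0)$; solving $\mathcal B(T,\mu)(w,s)=(0,1)$ produces a unique, smoothly $(T,\mu)$-dependent pair $(w(T,\mu),s(T,\mu))$ with $w(T_0,\mu_0)=v_0$ and $s(T_0,\mu_0)=0$. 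The point I would verify with care is that on this neighbourhood $T-\mu K$ is singular if and only if $s(T,\mu)=0$, in which case $N(T-\mu K)={\rm span}\{w(T,\mu)\}$: if $s(T,\mu)=0$ then $w(T,\mu)$ is a nonzero element of the kernel (nonzero because $\ell(w(T,\mu))=1$); conversely any kernel vector must have $\ell\neq0$ — otherwise it, paired with $0$, would be a nontrivial element of $N(\mathcal B(T,\mu))$ — so after rescaling it solves the same system and coincides with $w(T,\mu)$, while one-dimensionality of the kernel is forced by the index-zero Fredholm property.

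Next I would solve the scalar equation $s(T,\mu)=0$. Differentiating $(T-\mu K)w(T,\mu)+s(T,\mu)Kv_0=0$ with respect to $\mu$ at $(T_0,\mu_0)$ and applying $g$ gives $\partial_\mu s(T_0,\mu_0)=1\neq0$, so the implicit function theorem provides a smooth function $\mu(T)$ near $T_0$ with $\mu(T_0)=\mu_0$ and $s(T,\mu(T))\equiv0$; after shrinking the neighbourhoods one gets, for every $T\in U_1$, that $\mu(T)$ is the unique $\mu\in U_2$ with $T-\mu K$ singular, depending smoothly on $T$. Putting $w(T):=w(T,\mu(T))$, the normalisation $\ell(w(T))=1$ and the definition of $\ell$ give $w(T)-v_0\in Z$, and this is the only kernel vector of $T-\mu(T)K$ with that property (one-dimensional kernel plus the constraint $\ell=1$); smoothness of $w(T)$ in $T$ follows by composition. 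Finally, for $K$-simplicity of $\mu(T)$ it only remains to show $Kw(T)\notin Rg(T-\mu(T)K)$, which I would get from the auxiliary bordered operator $\mathcal C(T)(u,t):=\bigl((T-\mu(T)K)u-t\,Kw(T),\ \ell(u)\bigr)$: it equals $\mathcal B(T_0,\mu_0)$ up to the sign of the border when $T=T_0$, hence is an isomorphism there and, by continuity of $T\mapsto\mathcal C(T)$, for $T$ near $T_0$; were $Kw(T)$ in $Rg(T-\mu(T)K)$, the range of $\mathcal C(T)$ would be contained in $Rg(T-\mu(T)K)\times\R$ and $\mathcal C(T)$ could not be onto.

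The construction is soft, essentially routine perturbation theory; the genuine points of care are the equivalence between the scalar vanishing of $s(T,\mu)$ and the singularity of $T-\mu K$ — together with the identification of the kernel — since the bordered operator is itself invertible throughout the neighbourhood and one must argue back to the true eigenvalue problem, and the bookkeeping required to run both implicit function theorems with $T$ ranging in the Banach space $L(X,Y)$, so that the resulting $\mu$ and $w$ are smooth as functions of the \emph{operator}.
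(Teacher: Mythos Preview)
The paper does not give its own proof of this lemma: it is simply quoted as \cite[Lemma 1.3]{CrRa73}, so there is no in-paper argument to compare against. Your proposal is a correct and complete proof, and it is in fact essentially the argument of Crandall and Rabinowitz: a Lyapunov--Schmidt / bordered-operator reduction that turns the singular perturbation problem into a scalar equation $s(T,\mu)=0$, followed by the implicit function theorem in the Banach space $L(X,Y)$. The key verifications you flag --- invertibility of $\mathcal B(T_0,\mu_0)$, the equivalence of $s(T,\mu)=0$ with singularity of $T-\mu K$ together with the identification $N(T-\mu K)={\rm span}\{w(T,\mu)\}$, the computation $\partial_\mu s(T_0,\mu_0)=1$, and the persistence of $K$-simplicity via the auxiliary operator $\mathcal C(T)$ --- are all sound as written.
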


\subsection{Regularity of stationary points}
In our analysis, the functional $F$ appearing in the previous abstract results will be, roughly speaking, given by the first variation of the energy \eqref{1d-energy} (up to rewriting it in suitable local coordinates) and the sign of $\lambda'(s)$ will be determined by using the implicit function theorem along the lines of \cite[Section 5.4]{AmPr}. This will require some additional differentiability of the functional, which can be ensured in stronger topologies than the one of $W^{1,2}$. To this end, we need to show a priori that local minimizers of the energy are indeed regular.

To see this, we first recall the formula for the first variation of the energy as proved in \cite{CRS16}.
Using the short-hand $\mathbf{C}=\text{diag}(c_{23},c_{13},c_{12})$, stationary points of \eqref{1d-energy} satisfy the  integral equality
\begin{align}\label{weakform}
0&=\int_0^L\langle\mathbf{C}\omega_A(t)-c_{13}ke_2,R^T(t)\omega_{B^{\prime}}(t)\rangle-f\langle e_1,(\omega_{B}(t)\times R(t)e_1)\rangle\,\mathrm{d}t\nonumber 
\\
&=\int_0^L\langle\mathbf{C}\omega_A(t)-c_{13}ke_2,R^T(t)\omega_{B^{\prime}}(t)\rangle+f\langle R^T(t)e_1\times e_1,R^T(t)\w_B(t)\rangle\,\mathrm{d}t
\end{align}
for all $B\in W^{1,2}_0((0,L),\mathbb{M}^{3\times 3}_{skew})$
This gives $C^\infty$-regularity of stationary points, which we state and prove in the next lemma for the sake of completeness.
\begin{lemma}\label{regularity}
Let $R\in W^{1,2}((0,L),SO(3))$ satisfy (\ref{clamped}) and (\ref{weakform}). Then $R\in W^{k,1}((0,L),SO(3))$ for every $k\in\mathbb{N}$. In particular $R\in C^{\infty}([0,L],SO(3))$.
\end{lemma}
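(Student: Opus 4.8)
The plan is a standard bootstrap argument from the weak Euler–Lagrange equation \eqref{weakform}, exploiting that $R$ takes values in the compact manifold $SO(3)$ (so $R \in L^\infty$ automatically) and that $A = R^T R'$ encodes $R'$. First I would rewrite \eqref{weakform} in a form that isolates $\omega_{B'}$. Since $B \in W^{1,2}_0((0,L),\mathbb{M}^{3\times 3}_{skew})$ is arbitrary and the map $B \mapsto \omega_B$ is a linear isomorphism onto $W^{1,2}_0((0,L),\R^3)$, the identity \eqref{weakform} says precisely that the $\R^3$-valued function
\[
t \mapsto R(t)\bigl(\mathbf{C}\,\omega_A(t) - c_{13} k\, e_2\bigr)
\]
has distributional derivative equal to an $L^1$ (indeed $L^2$) function, namely (up to sign) $f\, R(t)\bigl(R^T(t) e_1 \times e_1\bigr) = f\, e_1 \times R(t)e_1$. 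Hence $R(\mathbf{C}\,\omega_A - c_{13}k e_2) \in W^{1,2} \hookrightarrow C^0$, and therefore $\mathbf{C}\,\omega_A - c_{13}k e_2 = R^T \cdot(\text{a }C^0\text{ function})$ is continuous; since $\mathbf{C}$ is invertible, $\omega_A \in C^0$, i.e. $A \in C^0$, i.e. $R' = RA \in C^0$. Thus $R \in C^1$.

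Next I would set up the induction. Suppose $R \in C^k([0,L],SO(3))$ for some $k \ge 1$; then $A = R^T R' \in C^{k-1}$ and $\omega_A \in C^{k-1}$. Differentiating the identity $R(\mathbf{C}\,\omega_A - c_{13}k e_2)$, whose derivative equals $f\, e_1 \times Re_1 \in C^k$, I get that $R(\mathbf{C}\,\omega_A - c_{13}k e_2) \in C^{k+1}$. Expanding, $R' (\mathbf{C}\,\omega_A - c_{13}k e_2) + R\,\mathbf{C}\,\omega_A' = f\, e_1 \times Re_1$. On the left the term $R'(\mathbf{C}\,\omega_A - c_{13}k e_2)$ is $C^{k-1}$ (a product of $C^{k-1}$ things), and $R\,\mathbf{C}$ is $C^k$ with $R\mathbf{C}$ invertible; the right side is $C^k$. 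Solving for $\omega_A'$ gives $\omega_A' = (R\mathbf{C})^{-1}\bigl(f\, e_1 \times Re_1 - R'(\mathbf{C}\,\omega_A - c_{13}k e_2)\bigr)$, which is $C^{k-1}$, so $\omega_A \in C^k$, hence $A \in C^k$, hence $R' = RA \in C^k$, i.e. $R \in C^{k+1}$. This closes the induction and yields $R \in C^\infty([0,L],SO(3))$, and a fortiori $R \in W^{k,1}$ for all $k$.

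The only mild subtlety — and the place to be a little careful rather than the real obstacle — is the very first step: justifying that \eqref{weakform} genuinely says $\frac{d}{dt}\bigl[R(\mathbf{C}\,\omega_A - c_{13}k e_2)\bigr] = f\, e_1\times Re_1$ in the distributional sense, using that test functions of the form $\omega_{B'}$ with $B \in W^{1,2}_0$ exhaust $W^{1,2}_0((0,L),\R^3)$ and hence (by density and the fundamental lemma of the calculus of variations) all of $C_c^\infty((0,L),\R^3)$. One should note that the integrand $\langle \mathbf{C}\omega_A - c_{13}ke_2, R^T\omega_{B'}\rangle$ on the first line equals $\langle R(\mathbf{C}\omega_A - c_{13}ke_2), \omega_{B'}\rangle$, and that $\omega_A = \omega_A(R')$ is a.e.-defined and in $L^2$ because $R \in W^{1,2}$; all products appearing are at worst $L^2 \cdot L^\infty$, so the integrals are well defined. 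Once this identification is in hand, the rest is the routine elliptic (here, ODE) bootstrap sketched above; I expect no genuine difficulty, only the bookkeeping of which factor lives in which $C^j$.
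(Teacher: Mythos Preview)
Your proof is correct and is the same bootstrap as the paper's, with only a cosmetic difference in the choice of test function: the paper sets $\omega_B = R\varphi$ with $\varphi \in W^{1,2}_0$ (so that $R^T\omega_{B'} = A\varphi + \varphi'$), which directly yields $\mathbf{C}\omega_A \in W^{1,1}$ and the strong ODE \eqref{E-L-ODE} for $\omega_A$ in one step, whereas you first obtain $[R(\mathbf{C}\omega_A - c_{13}ke_2)]' = f\,e_1\times Re_1$ and then solve for $\omega_A'$. Both iterations are the same thereafter; your remark that ``$\omega_{B'}$ exhaust $W^{1,2}_0$'' should really read that $\omega_B$ exhausts $W^{1,2}_0$ (so the pair $(\omega_{B'},\omega_B)=(\psi',\psi)$ gives the weak derivative identity), but you clearly have this right elsewhere.
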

\begin{proof}
With the admissible ansatz $\w_B=R\varphi$ with $\varphi\in W^{1,2}_0((0,L),\R^3)$, using that $R^{\prime}=RA$, the integral equality (\ref{weakform}) becomes
\begin{equation*}
0=\int_0^L\langle\mathbf{C}\omega_A(t),\varphi^{\prime}(t)\rangle+\langle c_{13}kA(t)e_2-A(t)\mathbf{C}\w_A(t)+fR^T(t)e_1\times e_1,\varphi(t)\rangle\,\mathrm{d}t.
\end{equation*}
Note that by H\"older's inequality the left entry in the second scalar product belongs to $L^1((0,L))$, so that by definition of distributional derivatives it follows that $\w_A\in W^{1,1}((0,L),\R^3)$ and, again by $R^{\prime}=RA$, we deduce $R\in W^{2,1}((0,L),SO(3))$. Inductively we conclude that $R\in W^{k,1}((0,L),SO(3))$ for every $k$ and by the Sobolev embedding we conclude that $R\in C^{\infty}([0,L],SO(3))$. 
\end{proof}
By the previous lemma stationary points and local minimizers must be regular and solve the system of ODEs
\begin{equation}\label{E-L-ODE}
\w_A^{\prime}(t)=c_{13}k\mathbf{C}^{-1}A(t)e_2-\mathbf{C}^{-1}A(t)\mathbf{C}\w_A(t)+f\mathbf{C}^{-1}(R^T(t)e_1\times e_1).
\end{equation}

\section{The energy in local coordinates}\label{section 3}
In order to work in a linear space, instead than on the manifold $W^{1,2}((0,L), SO(3))$, we will preliminarily rewrite the energy in local coordinates in an $L^\infty$ neighborhood of $I$. To this end we recall the notion of Cardan angles that we use as parameters. We namely define $\mathcal{G}:\R^3\to SO(3)$ as
\begin{align*}
\mathcal{G}&=\mathcal{G}(\alpha,\beta,\gamma)
\\
&=\begin{pmatrix}
\cos(\beta)\cos(\gamma) &-\cos(\b)\sin(\gamma) &\sin(\beta) \\
\sin(\a)\sin(\b)\cos(\g)+\cos(\a)\sin(\g) &\cos(\a)\cos(\g)-\sin(\a)\sin(\b)\sin(\g) &-\sin(\a)\cos(\b)\\
\sin(\a)\sin(\g)-\cos(\a)\sin(\b)\cos(\g) &\sin(\a)\cos(\g)+\cos(\a)\sin(\b)\sin(\g) &\cos(\a)\cos(\b)
\end{pmatrix}.
\end{align*}
It is well-known that $\mathcal{G}$, when restricted to $U=(-\pi,\pi)\times (-\pi/2,\pi/2)\times(-\pi,\pi)$, is a diffeomorphism from $U$ onto an open neighbourhood of $I$ in $SO(3)$.
With a slight abuse of notation we will also denote with $\mathcal G$ the induced mapping from $W^{1,2}((0,L),\R^3)$ to $W^{1,2}((0,L),SO(3))$. Its properties are summarized in the following lemma.
\begin{lemma}\label{inverse}
There exists $\delta>0$ such that for each $R\in W^{1,2}((0,L),SO(3))$ with $\|R-I\|_{\infty}<\delta$ there exists $\varphi\in W^{1,2}((0,L),\R^3)$ with $R=\mathcal{G}(\varphi)$. The function $\varphi=\mathcal{G}^{-1}(R)$ inherits the differentiability properties of $R$. If $R$ additionally satisfies (\ref{clamped}), then $\varphi\in W^{1,2}_0((0,L),\R^3)$.
\end{lemma}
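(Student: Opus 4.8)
The plan is to combine the pointwise diffeomorphism property of the Cardan map $\mathcal G$ on $U$ with a uniform quantitative control near $I$, and then transfer the Sobolev regularity through the chain rule. First I would observe that since $\mathcal G$ restricted to $U=(-\pi,\pi)\times(-\pi/2,\pi/2)\times(-\pi,\pi)$ is a diffeomorphism onto an open neighbourhood $\mathcal V$ of $I$ in $SO(3)$, with $\mathcal G(0)=I$, there is $r>0$ such that the ball $B_r(I)\cap SO(3)$ is contained in $\mathcal G(U')$ for some relatively compact $U'\Subset U$ containing $0$; set $\delta:=r$. Then for $R\in W^{1,2}((0,L),SO(3))$ with $\|R-I\|_\infty<\delta$ one has $R(t)\in B_r(I)\cap SO(3)$ for every $t\in[0,L]$ (the $\infty$-norm bound is pointwise, and $W^{1,2}\hookrightarrow C^{0}$ in one dimension), so the composition $\varphi(t):=\mathcal G^{-1}(R(t))$ is a well-defined function with values in $U'$, and $R=\mathcal G(\varphi)$ pointwise.

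Next I would verify $\varphi\in W^{1,2}$. Since $\mathcal G^{-1}:\mathcal V\to U$ is smooth and $\overline{U'}$ is compact, $\mathcal G^{-1}$ and its derivative are bounded and Lipschitz on $B_r(I)\cap SO(3)$; as $R$ is continuous and lies in this compact set, $\varphi=\mathcal G^{-1}\circ R$ is continuous, hence in $L^\infty$, and the chain rule for Sobolev functions composed with $C^1$ maps (valid here because $R\in W^{1,2}\cap L^\infty$ takes values in the compact set on which $\mathcal G^{-1}$ is $C^1$ with bounded derivative) gives $\varphi'=D\mathcal G^{-1}(R)\,R'\in L^2$. The same argument shows that if $R$ has higher differentiability ($C^k$, or $W^{k,p}$), then so does $\varphi$, since $\mathcal G^{-1}$ is $C^\infty$ and differentiating the identity $\varphi'=D\mathcal G^{-1}(R)R'$ repeatedly only produces products of bounded smooth functions of $R$ with derivatives of $R$; this is the "inherits the differentiability properties" clause.

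Finally, for the boundary conditions: if $R$ additionally satisfies \eqref{clamped}, i.e.\ $R(0)=R(L)=I$, then by continuity of $\varphi$ up to the endpoints and injectivity of $\mathcal G$ on $U$ together with $\mathcal G(0)=I$, we get $\varphi(0)=\varphi(L)=\mathcal G^{-1}(I)=0$, so $\varphi\in W^{1,2}_0((0,L),\R^3)$. The only genuinely delicate point is the uniformity of $\delta$: one must make sure a single radius $\delta$ works for \emph{all} such $R$, which is why I would extract $\delta$ directly from the local diffeomorphism structure at $I$ (openness of $\mathcal G(U)$ and $\mathcal G(0)=I$) rather than curve by curve, and keep $\varphi$ valued in a fixed compact $U'\Subset U$ so that all the bounds on $\mathcal G^{-1}$ and its derivatives used in the chain rule are uniform. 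The rest is a routine application of the chain rule in Sobolev spaces.
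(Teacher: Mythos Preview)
Your proposal is correct and follows essentially the same approach as the paper: the paper's own proof consists of a single sentence invoking the smooth invertibility of $\mathcal{G}$ near $I$ together with the chain rule for Sobolev functions, and your argument is simply a careful unpacking of exactly this. The additional care you take---extracting a uniform $\delta$ from the open image, keeping $\varphi$ valued in a compact $U'\Subset U$ to guarantee uniform bounds, and noting $\mathcal G(0)=I$ for the boundary conditions---fills in the details the paper leaves implicit but introduces no new ideas.
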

\begin{proof}
Since $\mathcal{G}$ is invertible with smooth inverse in a neighbourhood of $I$, the claim follows by the chain rule for Sobolev functions.
\end{proof}

With the previous lemma at hand, we rewrite the energy in terms of Cardan angles. A direct computation yields that, for a vector-valued function $\varphi=(\varphi_1,\varphi_2,\varphi_3)$, the components of the skew-symmetric matrix $\mathcal{G}^T(\varphi(t))\mathcal{G}(\varphi(t))^{\prime}$ are given by
\begin{align*}
&a_{12}(t)=-\varphi_1^{\prime}(t)\sin(\varphi_2(t))-\varphi_3^{\prime}(t),\\
&a_{13}(t)=-\varphi_1^{\prime}(t)\cos(\varphi_2(t))\sin(\varphi_3(t))+\varphi_2^{\prime}(t)\cos(\varphi_3(t)),\\
&a_{23}(t)=-\varphi_1^{\prime}(t)\cos(\varphi_2(t))\cos(\varphi_3(t))-\varphi_2^{\prime}(t)\sin(\varphi_3(t)).
\end{align*}
Hence, the one-dimensional energy in (\ref{1d-energy}) can be written in terms of Cardan angles as
\begin{align}\label{angleenergy}
E_0^f(\varphi)=&\frac{1}{2}\int_0^Lc_{12}\left(\varphi_1^{\prime}(t)\sin(\varphi_2(t))+\varphi_3^{\prime}(t)\right)^2+c_{13}\left(\varphi_1^{\prime}(t)\cos(\varphi_2(t))\sin(\varphi_3(t))-\varphi_2^{\prime}(t)\cos(\varphi_3(t))+k\right)^2\dt\nonumber \\
&+\frac{1}{2}\int_0^Lc_{23}\left(\varphi_1^{\prime}(t)\cos(\varphi_2(t))\cos(\varphi_3(t))+\varphi_2^{\prime}(t)\sin(\varphi_3(t))\right)^2-2f\cos(\varphi_2(t))\cos(\varphi_3(t))\,\dt.
\end{align}
For notational convenience we introduce the integrand $g_f:\R^3\times\R^3\to\R$ setting
\begin{align*}
g_f(u,\xi)=&\frac{c_{12}}{2}(\xi_1\sin(u_2)+\xi_3)^2+\frac{c_{13}}{2}(\xi_1\cos(u_2)\sin(u_3)-\xi_2\cos(u_3)+k)^2\\
&+\frac{c_{23}}{2}(\xi_1\cos(u_2)\cos(u_3)+\xi_2\sin(u_3))^2-f\cos(u_2)\cos(u_3).
\end{align*}
Notice that this integrand is quadratic in $\xi$ and satisfies all the assumptions in \cite[Lemma 4.5]{CRS16}. Therefore, the same proof yields the following differentiability property:
\begin{align}\label{frechet}
E_0^f\in C^2(W^{1,2}((0,L),\R^3), \R).
\end{align}

In order to study the behaviour of the energy close to the critical force, we start with a bifurcation analysis of the angular-energy (\ref{angleenergy}). To this end, we need the associated Euler-Lagrange equation given by 
\begin{equation}\label{EL-angle}
\left({\nabla}_{\xi}g_f(\varphi(t),\varphi^{\prime}(t))\right)^{\prime}=\nabla_ug_f(\varphi(t),\varphi^{\prime}(t))
\end{equation}
\begin{remark}\label{equivequations}
Let us observe that if $\varphi\in C_0^2([0,L],\R^3)$ is a strong solution of the system above, then the function $\mathcal{G}(\varphi)$ is a stationary point of the functional $E_0^f$ in (\ref{1d-energy}), that is it satisfies (\ref{weakform}). Indeed, any curve of admissible deformations that is tangential to $\mathcal{G}(\varphi)$ can be transformed via Lemma \ref{inverse} to a tangential curve of Cardan angles. Conversely, by the Lemmata \ref{regularity} and \ref{inverse}, any stationary point of the functional (\ref{1d-energy}) which is sufficiently close to the identity yields Cardan angles that are a regular solutions of (\ref{EL-angle}).
\end{remark}
We now set $f=\lambda$ as bifurcation parameter, and study the operator $F:C_0^2([0,L],\R^3)\times\R\to C([0,L],\R^3)$ defined as
\begin{equation}\label{defoperator}
F(\varphi,\lambda)=\left({\nabla}_{\xi}g_{\lambda}(\varphi(t),\varphi^{\prime}(t))\right)^{\prime}-\nabla_ug_{\lambda}(\varphi(t),\varphi^{\prime}(t)).
\end{equation}
Note that, by definition, for any $\varphi\in C^2_0([0,L],\R^3)$ and every $w\in W^{1,2}_0((0,L),\R^3)$ it holds
\begin{equation}\label{minussign}
\langle F(\varphi,\lambda),w\rangle=-\langle DE_0^{\lambda}(\varphi),w\rangle.
\end{equation}
On its domain this functional is very regular in the sense of Fr{\'e}chet-differentiability as stated in the lemma below. 
\begin{lemma}\label{diff}
The operator $F:C_0^2([0,L],\R^3)\times\R\to C([0,L],\R^3)$ is  $C^\infty$.
\end{lemma}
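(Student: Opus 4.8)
The plan is to realize $F$ as the composition of a bounded linear operator with a Nemytskii (superposition) operator, and then to invoke the smoothness of Nemytskii operators between spaces of continuous functions. First I would record that $g_f(u,\xi)$ is a polynomial of degree two in $\xi$ whose coefficients are trigonometric polynomials in $u$ and affine functions of $f$; hence $(u,\xi,\lambda)\mapsto g_\lambda(u,\xi)$ belongs to $C^\infty(\R^3\times\R^3\times\R,\R)$, and so do $\nabla_\xi g_\lambda$ and $\nabla_u g_\lambda$. Since $g_\lambda$ is quadratic in $\xi$, the map $\xi\mapsto\nabla_\xi g_\lambda(u,\xi)$ is affine, so expanding the $t$-derivative in \eqref{defoperator} by the chain rule one sees that $\varphi''$ enters only linearly and is multiplied by a coefficient that is a smooth function of $\varphi$, hence continuous. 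More precisely, there is a function $G\in C^\infty(\R^3\times\R^3\times\R^3\times\R,\R^3)$, depending only on $g$, such that
\begin{equation*}
F(\varphi,\lambda)(t)=G\big(\varphi(t),\varphi'(t),\varphi''(t),\lambda\big)\qquad\text{for every }t\in[0,L]
\end{equation*}
and every $(\varphi,\lambda)\in C_0^2([0,L],\R^3)\times\R$; in particular this also re-verifies that $F$ maps into $C([0,L],\R^3)$.

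Next I would factor $F=N_G\circ\Phi$, where $\Phi:C_0^2([0,L],\R^3)\times\R\to C([0,L],\R^{10})$ sends $(\varphi,\lambda)$ to the continuous curve $t\mapsto(\varphi(t),\varphi'(t),\varphi''(t),\lambda)$ (with $\lambda$ viewed as a function constant in $t$), and $N_G(w)(t)=G(w(t))$ is the superposition operator. The map $\Phi$ is linear and bounded by the very definition of the $C^2$-norm, hence $C^\infty$, so everything reduces to showing that $N_G:C([0,L],\R^{10})\to C([0,L],\R^3)$ is $C^\infty$. This is the only genuinely analytic step, and it is classical: fix $w\in C([0,L],\R^{10})$; its image is compact, hence contained together with a compact neighbourhood $K$ in $\R^{10}$, on which $G$ and all its derivatives up to any prescribed order are bounded and uniformly continuous, and for $\|h\|_\infty$ small $w+h$ still takes values in $K$. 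Taylor's theorem with these uniform bounds yields that $N_G$ is Fr\'echet differentiable with $DN_G(w)[h](t)=DG(w(t))[h(t)]$, that $w\mapsto DN_G(w)$ is continuous, and, inductively, that $D^jN_G(w)[h_1,\dots,h_j](t)=D^jG(w(t))[h_1(t),\dots,h_j(t)]$ defines a bounded $j$-linear operator depending continuously on $w$ (as is standard for superposition operators in the sup-norm; see, e.g., \cite{AmPr}). Thus $N_G\in C^\infty$, and since compositions of $C^\infty$ maps between Banach spaces are $C^\infty$, so is $F=N_G\circ\Phi$.

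The main — and essentially only — obstacle is the smoothness of the Nemytskii operator in the sup-norm. Compared with the $W^{1,2}$-based statement \eqref{frechet}, it is considerably easier: $C([0,L])$ is a Banach algebra and continuous functions on a compact interval have compact image, which localizes the problem to a compact set and removes any growth-condition subtlety on $G$; no hypothesis beyond the joint smoothness of $g_f$ (which holds on all of $\R^3\times\R^3\times\R$, with no singularities) is needed.
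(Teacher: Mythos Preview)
Your proposal is correct. The paper itself omits the proof entirely (``The proof is left to the reader''), so there is nothing to compare against; what you have written is exactly the standard argument one would expect a reader to supply. The factorization $F=N_G\circ\Phi$ with $\Phi$ bounded linear and $N_G$ a superposition operator on continuous functions is the right decomposition, and your justification that $g_\lambda$ being quadratic in $\xi$ forces $\varphi''$ to enter only linearly (so that the expanded expression is indeed a smooth function of the jet $(\varphi,\varphi',\varphi'',\lambda)$, with no loss of continuity) is the key observation that makes the target space $C([0,L],\R^3)$ rather than a negative-order space. The smoothness of Nemytskii operators on $C(K)$ via compactness of the image and Taylor expansion with uniform remainder is classical and correctly sketched.
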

\begin{proof} The proof is left to the reader.
\end{proof}
We set
\begin{align}\label{lambda_0}
\lambda_{0}=\frac{(c_{13}k)^2}{c_{23}}-\frac{4\pi^2c_{12}}{L^2}
\end{align}
and 
\begin{equation}\label{kernel}
w^{*}(t)=\begin{pmatrix}
\frac{c_{13}kL}{2c_{23}\pi} \left(1-\cos(\frac{2\pi}{L}t)\right)
\\
0
\\
-\sin(\frac{2\pi}{L}t)
\end{pmatrix}.
\end{equation}

We remark that, because of \eqref{ass_0}, $\lambda_{0}>0$. We also note that the third component of $w^{*}$ has a change of sign in $[0,L]$, which is due to considering clamped boundary conditions in the eigenvalue problem for $D_{\varphi}F$ and will eventually lead to an inversion of curvature of the bifurcating stable configurations (see Fig \ref{fig1}).\\

As a first step of our analysis, we show that the operator $F$ fulfills the assumption of the bifurcation theorem for $\lambda=\lambda_0$.
\begin{lemma}\label{bifurcation}
Let $\lambda_0$ and $w^{*}\in C_0^2([0,L],\R^3)$ be given by \eqref{lambda_0}, and \eqref{kernel}, respectively. Then the operator $F$ defined in \eqref{defoperator} satisfies the assumptions of Theorem \ref{CR} with $v=w^{*}$.
\end{lemma}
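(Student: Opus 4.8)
The goal is to verify, one by one, the four hypotheses of Theorem \ref{CR} for the map $F$ of \eqref{defoperator} at the parameter value $\lambda_0$ in \eqref{lambda_0}, taking $X=C_0^2([0,L],\R^3)$, $Y=C([0,L],\R^3)$, and $v=w^*$ from \eqref{kernel}. Hypothesis (i), namely $F(0,\lambda)=0$, is immediate: the straight configuration $\varphi\equiv 0$ satisfies the Euler--Lagrange equation \eqref{EL-angle} for every force, which one checks directly from the explicit form of $g_\lambda$ (all first derivatives of $g_\lambda$ in $\xi$ and in $u$ vanish at $(0,0)$ except those producing the intrinsic-curvature term, and the latter cancels between $\nabla_\xi$ and $\nabla_u$ after differentiation in $t$). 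Hypothesis (ii), the existence and continuity of $D_\varphi F$, $D_\lambda F$, $D^2_{\varphi,\lambda}F$, follows at once from Lemma \ref{diff}, which asserts $F\in C^\infty$.

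The substantive work is in hypotheses (iii) and (iv), and both reduce to computing the linearization $L_{\lambda}:=D_\varphi F(0,\lambda)$. Differentiating \eqref{defoperator} at $\varphi=0$ and using the symmetry properties of $g_\lambda$ near the origin, one finds that $L_\lambda$ is a linear constant-coefficient second-order differential operator acting on $\psi=(\psi_1,\psi_2,\psi_3)\in C_0^2$. The plan is to write it out explicitly: the $\psi_1$-equation decouples (it is $c_{12}\psi_1''=0$, whose only solution in $C_0^2$ is $\psi_1\equiv 0$, reflecting the fact that $\psi_1$ corresponds to an overall rotation that is killed by the clamped boundary conditions), while the $(\psi_2,\psi_3)$-block is a $2\times 2$ system with constant coefficients, coupling $\psi_2''$, $\psi_3''$ and lower-order terms $\lambda \psi_2$, $\lambda \psi_3$ together with a cross term proportional to $c_{13}k$. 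One then solves the eigenvalue-type problem $L_{\lambda_0}\psi=0$ with $\psi(0)=\psi(L)=0$: after diagonalizing the $2\times 2$ block, the admissible solutions are $\sin(n\pi t/L)$-modes, and the choice $\lambda_0=(c_{13}k)^2/c_{23}-4\pi^2 c_{12}/L^2$ is precisely the value at which the first nontrivial mode (corresponding to $n=2$ in the relevant component, hence $\cos(2\pi t/L)$-type behaviour in $\psi_1$ and $\sin(2\pi t/L)$ in $\psi_3$) enters the kernel; the resulting null vector is exactly $w^*$ in \eqref{kernel}. This gives $N(L_{\lambda_0})=\mathrm{span}\{w^*\}$ once one checks $\lambda_0$ is below the thresholds for all other modes, which is where \eqref{ass_0} is used. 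That $L_{\lambda_0}$ is Fredholm of index zero follows because it is a compact perturbation (via the lower-order terms and the compact embedding $C_0^2\hookrightarrow C^1$) of the isomorphism $\psi\mapsto \mathrm{diag}(c_{23},c_{13},c_{12})\psi''$ from $C_0^2$ onto $C$ (invertibility of the latter being the solvability of $\psi''=g$, $\psi(0)=\psi(L)=0$ in $C^2_0$); hence $\mathrm{codim}\,Rg(L_{\lambda_0})=\dim N(L_{\lambda_0})=1$.

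For hypothesis (iv) I would compute the transversality vector $\eta:=D^2_{\varphi,\lambda}F(0,\lambda_0)w^*$. Since $\lambda$ enters \eqref{defoperator} only through the term $-\partial_\lambda g_\lambda = \cos(u_2)\cos(u_3)$ and its $u$-derivatives, differentiating first in $\varphi$ and then in $\lambda$ produces a simple explicit function: the only surviving contribution is through the $\psi_2,\psi_3$ components, yielding $\eta$ proportional to $(0,\,w^*_2,\,w^*_3)$-type data, concretely a nonzero multiple of $\sin(2\pi t/L)$ in the third slot. To show $\eta\notin Rg(L_{\lambda_0})$, I would characterize the range: since $L_{\lambda_0}$ is self-adjoint for the natural $L^2$ pairing (the operator arises as $-D^2 E_0^{\lambda_0}(0)$ up to the sign in \eqref{minussign}, which is symmetric), the Fredholm alternative gives $Rg(L_{\lambda_0})=\{h\in C: \int_0^L\langle h,w^*\rangle\,\dt=0\}$, so it suffices to verify $\int_0^L\langle \eta,w^*\rangle\,\dt\neq 0$. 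This integral is an elementary trigonometric computation: $\int_0^L \sin^2(2\pi t/L)\,\dt = L/2 \neq 0$, with a nonzero constant prefactor coming from the coefficients. I expect this last transversality check to be the only place where a small but genuine computation is unavoidable; the main conceptual obstacle is organizing the linearization so that the decoupling of $\psi_1$ and the diagonalization of the $(\psi_2,\psi_3)$-block are transparent, after which hypotheses (iii) and (iv) fall out of standard Sturm--Liouville facts and the Fredholm alternative.
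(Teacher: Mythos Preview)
Your overall strategy matches the paper's: verify (i) and (ii) quickly, establish Fredholm index zero by writing $D_\varphi F(0,\lambda_0)$ as a compact perturbation of $\psi\mapsto\mathbf{C}\psi''$, compute the kernel explicitly, and check transversality. Your argument for (iv) via formal self-adjointness and the Fredholm alternative is in fact cleaner than the paper's contradiction argument, and the paper uses exactly your observation $\int_0^L\langle D^2_{\varphi,\lambda}F(0,\lambda_0)w^*,w^*\rangle\,\dt=-L/2\neq 0$ immediately afterwards, in \eqref{complement}--\eqref{orthogonal} and Lemma~\ref{abc}.

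However, your description of the linearization in (iii) is wrong, and the error is not merely cosmetic. Computing $D_\varphi F(0,\lambda)w$ from \eqref{derivativeoperator} gives, in components,
\[
(L_\lambda w)_1=c_{23}w_1''+c_{13}k\,w_3',\qquad (L_\lambda w)_2=c_{13}w_2''-\lambda w_2,\qquad (L_\lambda w)_3=c_{12}w_3''-c_{13}k\,w_1'-\lambda w_3.
\]
Thus it is the \emph{second} component that decouples, not the first, and the coupled block is $(w_1,w_3)$, not $(w_2,w_3)$. The decoupled equation is $c_{13}w_2''=\lambda_0 w_2$ with Dirichlet data, which forces $w_2\equiv 0$ precisely because $\lambda_0>0$; this is the actual role of \eqref{ass_0}, not a threshold comparison with higher modes. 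Your claim that $\psi_1\equiv 0$ in the kernel is incompatible with the explicit $w^*$ in \eqref{kernel}, whose first component is nonzero---you seem to notice this yourself when you later speak of ``$\cos(2\pi t/L)$-type behaviour in $\psi_1$''. Moreover, the coupled $(w_1,w_3)$ system is not a standard Sturm--Liouville problem with pure $\sin(n\pi t/L)$ modes: written as a first-order $4\times4$ system, the coefficient matrix has $0$ as an eigenvalue of algebraic multiplicity two, so the general solution contains affine pieces $a_1+a_2t$ alongside the trigonometric ones, and one must impose the four boundary conditions and solve the resulting linear system to isolate the one-dimensional kernel spanned by $w^*$. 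Once you correct the block structure, the remainder of your plan goes through.
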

\begin{proof}
By a direct computation $\nabla_{\xi}g_{\lambda}(0,0)=\nabla_ug_{\lambda}(0,0)=0$ for all $\lambda$. Hence $F(0,\lambda)=0$. Moreover, by Lemma \ref{diff} the operator fulfills the differentiability assumptions (ii). Next, let us calculate the first derivative. We have that
\begin{equation}\label{derivativeoperator}
D_{\varphi}F(\varphi,\lambda)w=\left(\partial_{\xi}\partial_{u}g_{\lambda}(\varphi,\varphi^{\prime})w+\partial^2_{\xi}g_{\lambda}(\varphi,\varphi^{\prime})w^{\prime}\right)^{\prime}-\partial^2_{u}g_{\lambda}(\varphi,\varphi^{\prime})w-\partial_{u}\partial_{\xi}g_{\lambda}(\varphi,\varphi^{\prime})w^{\prime}.
\end{equation}
Plugging in $\varphi=0$ we obtain by a straightforward calculation that
\begin{equation*}
D_{\varphi}F(0,\lambda)w=c_{12}w_3^{\prime\prime}e_3+c_{13}w_2^{\prime\prime}e_2+c_{13}kw_3^{\prime}e_1-c_{13}kw_1^{\prime}e_3+c_{23}w_1^{\prime\prime}e_1-\lambda w_2e_2-\lambda w_3e_3.
\end{equation*}
Note that the bounded, linear operator $T:C_0^2([0,L],\R^3)\to C([0,L],\R^3)$ defined by $Tw=\mathbf{C}w^{\prime\prime}$ is bijective. Hence $D_{\varphi}F(0,\lambda_0)$ is a compact perturbation of a bijective operator, whence a Fredholm-operator of index zero. A direct computation shows that $w^{*}$ given by (\ref{kernel}) satisfies $D_{\varphi}F(0,\lambda_0)w^{*}=0$. In order to determine the dimension of the kernel, we note that the equation $D_{\varphi}F(0,\lambda_0)w=0$ is a system of second order linear differential equations. The second component $w_2$ must satisfy
\[
c_{13}w_2^{\prime\prime}-\lambda_0 w_2=0.
\]
Since we assume $\lambda_0>0$, by the Dirichlet boundary conditions we immediately get $w_2\equiv0$. For the remaining components one can write the equation as a first order four-dimensional system with the constant matrix
\begin{equation*}
A=\begin{pmatrix}
0 &-c_{13}k/c_{23} &0 &0\\ c_{13}k/c_{12} &0 &0 &\lambda_0/c_{12}\\ 1&0&0&0 \\ 0&1&0&0
\end{pmatrix}.
\end{equation*}
This matrix as the eigenvalues $\left\{0,\pm\sqrt{\frac{\lambda_0}{c_{12}}-\frac{(c_{13}k)^2}{c_{12}c_{23}}}\right\}$, where $0$ has algebraic multiplicity $2$. Hence the solutions are of the form
\begin{equation*}
\begin{split}
&w_1(t)=a_1+a_2t+a_3\exp(2\pi it/L)+a_4\exp(-2\pi it/L),\\
&w_3(t)=b_1+b_2t+b_3\exp(2\pi it/L)+b_4\exp(-2\pi it/L),
\end{split}
\end{equation*}
with $a_i,b_i\in\mathbb{C}$. Plugging this ansatz into the equation and comparing the coefficients of the independent functions, together with the boundary conditions we obtain a $8$-dimensional linear system that can be solved explicitly for a one-dimensional kernel spanned by the function $w^{*}$ in \eqref{kernel}. 

To show the transversality condition $D^2_{\varphi,\lambda}F(0,\lambda_{0})w^{*}\notin Rg(D_{\varphi}F(0,\lambda_{0}))$, we argue by contradiction. Then there exists a solution of the system
\begin{equation*}
c_{12}w_3^{\prime\prime}e_3+c_{13}w_2^{\prime\prime}e_2+c_{13}kw_3^{\prime}e_1-c_{13}kw_1^{\prime}e_3+c_{23}w_1^{\prime\prime}e_1-\lambda_0 w_2e_2-\lambda_0 w_3e_3=\lambda_0\sin\left(\frac{2\pi}{L}t\right)e_3.
\end{equation*}
Integrating the first component we infer that $c_{23}w_1^{\prime}=-c_{13}kw_3-C$ for some constant $C\in\R$. With this formula we can rewrite the third component via
\begin{equation*}
c_{12}w_3^{\prime\prime}+\frac{c_{13}k}{c_{23}}(c_{13}kw_3+C)-\lambda_0w_3=\lambda_0\sin\left(\frac{2\pi}{L}t\right).
\end{equation*}
Multiplying the equation with the right hand side and integrating twice by parts over $(0,L)$ we obtain
\begin{equation*}
0<\lambda_0\int_0^L-\frac{4\pi^2c_{12}}{L^2}w_3\sin\left(\frac{2\pi}{L}t\right)+\left(\frac{(c_{13}k)^2}{c_{23}}-\lambda_0\right)w_3\sin\left(\frac{2\pi}{L}t\right)\,\dt=0,
\end{equation*}
where we used the definition of $\lambda_0$ and the boundary conditions on $w_3$. This gives the desired contradiction.
\end{proof}
\begin{remark}\label{multiple}
With a similar analysis as the one in the previous lemma, one can also find other eigenvalues of the operator $D_{\varphi}F$, corresponding to smaller forces than the critical one. The corresponding eigenstates show more than one sign change in the third component, that would lead to multiple inversions of curvature (see \cite{Plos} for experimental evidence). On the other hand, possible bifurcation curves starting from the straight configuration along those directions are, at least close to the identity, not made of local minimizers. This can be proved exploiting that, according to Theorem \ref{idstable?}, the second differential of $E_0^f$ at the identity is not positive semidefinite for $f<f_{crit}$, together with a lower semicontinuity argument. This is not the case for the bifurcation branch from the largest eigenvalue $\lambda_0=f_{crit}$. Indeed we will show in Theorem \ref{localopt} that such a branch in a neighborhood of the straight configuration consists of local minimizers.
\end{remark}
The previous lemma and the Crandall-Rabinowitz Theorem \ref{CR} entail the existence of a branch of solutions bifurcating the identity at the critical force. A precise statement with additional properties will be given in the next section. 
In order to investigate further the behaviour of the non-trivial branch, we will follow the general approach described in \cite[Section 5.4]{AmPr}. To this end, we first notice that 
\begin{equation}\label{complement}
Rg(D_{\varphi}F(0,\lambda_0))=\{w\in C([0,L],\R^3):\;\langle w^{*},w\rangle=0\}\,,
\end{equation}
where we identify the function $w^{*}$ in \eqref{hemihelix} with an absolutely continuous vector-valued measure as usual. The range has indeed codimension $1$ by the previous lemma, while a direct computation based on integration by parts gives for all $w\in C_0^2([0,L],\R^3)$ 
\begin{equation}\label{orthogonal}
\langle w^{*},D_{\varphi}F(0,\lambda_{0})w\rangle=\int_0^Lw^{*}(t)^TD_{\varphi}F(0,\lambda_{0})w(t)\,\mathrm{d}t=\int_0^Lw(t)^TD_{\varphi}F(0,\lambda_{0})w^{*}(t)\,\mathrm{d}t=0.
\end{equation}
In order to determine the type of bifurcation, in the next lemma we compute the following terms:
\begin{align}
&a:=\langle w^{*},D^2_{\varphi,\lambda}F(0,\lambda_{0})w^{*}\rangle,\label{a}
\\
&b:=\frac{1}{2}\langle w^{*},D^2_{\varphi,\varphi}F(0,\lambda_{0})[w^{*},w^{*}]\rangle,\label{b}
\\
&c:=-\frac{1}{3a}\langle w^{*},D^3_{\varphi,\varphi,\varphi}F(0,\lambda_{0})[w^{*},w^{*},w^{*}]\rangle.\label{c}
\end{align}

\begin{lemma}\label{abc}
For $F$  as in \eqref{defoperator} and $\lambda_0$ as in \eqref{lambda_0}, it holds
\begin{align*}
&a=-\frac{L}{2},\\
&b=0,\\
&c=-\left(\frac{3(c_{13}-c_{23})(c_{13}k)^2}{c^2_{23}}+\frac{9(c_{13}k)^2}{2c_{23}}-\frac{\lambda_0}{4}\right)=-\left(\frac{(3c_{13}+\frac{5}{4}c_{23})(c_{13}k)^2}{c^2_{23}}+\frac{\pi^2c_{12}}{L^2}\right).
\end{align*}
In particular $c<0$.
\end{lemma}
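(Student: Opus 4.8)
# Proof Proposal for Lemma \ref{abc}

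The plan is to reduce everything to explicit Taylor expansions of the integrand $g_f$ at the origin, since the quantities $a$, $b$, $c$ are all contractions of derivatives of $F$ against the kernel element $w^{*}$, and $F$ is (up to sign) the first variation of $E_0^\lambda$. More precisely, by \eqref{minussign} we have $\langle F(\varphi,\lambda),w\rangle = -\langle DE_0^\lambda(\varphi),w\rangle$, so $\langle w^{*},D^j_{\varphi\cdots\varphi}F(0,\lambda_0)[w^{*},\dots]\rangle = -D^{j+1}E_0^{\lambda_0}(0)[w^{*},\dots,w^{*}]$ with the appropriate number of copies of $w^{*}$. Thus $a$, $b$ and (the numerator of) $c$ are, respectively, $-\partial_\lambda D^2 E_0^{\lambda_0}(0)[w^{*},w^{*}]$, $-\tfrac12 D^3 E_0^{\lambda_0}(0)[w^{*},w^{*},w^{*}]$, and $\tfrac{1}{3a}D^4 E_0^{\lambda_0}(0)[w^{*},w^{*},w^{*},w^{*}]$. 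I would compute these by expanding $g_\lambda(u,\xi)$ as a polynomial in $u$ and $\xi$ up to fourth order around $(0,0)$, using $\sin(u_i)=u_i - u_i^3/6+\cdots$, $\cos(u_i)=1-u_i^2/2+\cdots$, then substituting $u=w^{*}(t)$, $\xi=(w^{*})'(t)$ and integrating over $(0,L)$, recalling that $w^{*}_2\equiv 0$, so only the variables $u_1,u_3,\xi_1,\xi_3$ (and the parameter) ever appear.

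For $a$: the second variation $D^2 E_0^{\lambda_0}(0)$ is the quadratic form whose integrand is the Hessian of $g_\lambda$ at $(0,0)$ evaluated on $(w^{*},(w^{*})')$; differentiating in $\lambda$ kills everything except the load term $-f\cos(u_2)\cos(u_3)$, whose $\lambda$-derivative of the quadratic part is $+\tfrac12(u_2^2+u_3^2)$, i.e. $+\tfrac12 (w^{*}_3)^2 = \tfrac12\sin^2(\tfrac{2\pi}{L}t)$. Hence $a = -\int_0^L \tfrac12\sin^2(\tfrac{2\pi}{L}t)\,\dt = -L/4$... which disagrees with the claimed $-L/2$; I would recheck the normalization of $\langle\cdot,\cdot\rangle$ and of $D^2_{\varphi,\lambda}F$ versus the bilinear form — the factor-of-two discrepancy is exactly the kind of bookkeeping one must pin down, and the correct value $-L/2$ will follow once the conventions in \eqref{a}--\eqref{c} are matched to \eqref{minussign}. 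For $b$: the claim $b=0$ should follow from a parity argument — the cubic terms in $g_\lambda$ in the variables $(u_1,u_3,\xi_1,\xi_3)$ that survive, when contracted three times against $w^{*}$ and its derivative, produce an integrand that is an odd trigonometric polynomial in $\tfrac{2\pi}{L}t$ (products of $\cos$, $1-\cos$, $\sin$ with a net odd number of $\sin$ factors after using the explicit form of $w^{*}$), hence integrates to zero over a full period; I would make this rigorous by listing the cubic monomials of $g_\lambda$ explicitly and checking each vanishes upon integration.

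The main work — and the main obstacle — is the quartic term $c$. Here one needs all the fourth-order monomials of $g_\lambda(u,\xi)$ in $(u_1,u_3,\xi_1,\xi_3)$ coming from expanding the three elastic terms $\tfrac{c_{12}}{2}(\xi_1\sin u_2+\xi_3)^2$, $\tfrac{c_{13}}{2}(\xi_1\cos u_2\sin u_3 - \xi_2\cos u_3 + k)^2$, $\tfrac{c_{23}}{2}(\xi_1\cos u_2\cos u_3 + \xi_2\sin u_3)^2$ and the load $-\lambda\cos u_2\cos u_3$; on $w^{*}_2\equiv0$ many simplify, but the cross terms with the intrinsic curvature $k$ (linear-in-$k$ pieces multiplied by cubic-in-$(u,\xi)$ pieces) are what generate the $(c_{13}k)^2/c_{23}^2$ and $(c_{13}k)^2/c_{23}$ contributions, using $w^{*}_1 = \tfrac{c_{13}kL}{2c_{23}\pi}(1-\cos\tfrac{2\pi}{L}t)$ and $(w^{*}_1)' = \tfrac{c_{13}k}{c_{23}}\sin\tfrac{2\pi}{L}t$. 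After substituting $w^{*}$ one integrates products like $\int_0^L (1-\cos)^m \sin^n \cos^p$ over a period using elementary formulas, collects the coefficients of $c_{13}$, $c_{23}$, $c_{12}$, $\lambda_0$, and finally substitutes $\lambda_0 = \tfrac{(c_{13}k)^2}{c_{23}} - \tfrac{4\pi^2 c_{12}}{L^2}$ to rewrite the result in the second displayed form. The negativity $c<0$ is then immediate from that second expression, since every term is positive and it appears with an overall minus sign. The only genuine risk is an arithmetic slip in the quartic bookkeeping, which I would guard against by cross-checking the two stated closed forms of $c$ against each other via the definition of $\lambda_0$.
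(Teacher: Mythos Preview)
Your approach is essentially the one the paper takes: both reduce the computation to partial derivatives of the integrand $g_\lambda$ at the origin, substitute $(w^{*},(w^{*})')$, and integrate. The paper works component by component with $F$, while you pass through the energy via \eqref{minussign}; these are equivalent, and your reformulation $a=-\partial_\lambda D^2E_0^{\lambda_0}(0)[w^{*},w^{*}]$, $b=-\tfrac12 D^3E_0^{\lambda_0}(0)[w^{*},w^{*},w^{*}]$, $3ac=D^4E_0^{\lambda_0}(0)[w^{*},\dots,w^{*}]$ is correct.

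Two points deserve tightening. First, your factor-of-two discrepancy in $a$ is the usual confusion between a Taylor coefficient and a differential: the second \emph{differential} of $-\cos u_3$ at $0$ evaluated on $[w^{*},w^{*}]$ is $(w^{*}_3)^2$, not $\tfrac12(w^{*}_3)^2$ (the latter is the quadratic Taylor term $\tfrac12 D^2$). With this correction $a=-\int_0^L\sin^2(\tfrac{2\pi}{L}t)\,\dt=-L/2$ as stated. The same convention (multiplying the degree-$n$ Taylor coefficient by $n!$) must be applied consistently when you compute $c$. Second, your argument for $b=0$ can be made much sharper, and this is what the paper actually does: $g_\lambda$ restricted to $u_2=\xi_2=0$ is \emph{even} under $(u_1,u_3,\xi_1,\xi_3)\mapsto-(u_1,u_3,\xi_1,\xi_3)$, so every cubic monomial in those variables vanishes identically at the origin. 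Hence $D^2_{\varphi,\varphi}F(0,\lambda)[w^{*},w^{*}]$ has zero first and third components as \emph{functions}, not merely after integration; no ``odd trigonometric polynomial'' bookkeeping is needed. Your plan for $c$ (list the quartic monomials of $g_\lambda$ in $(u_3,\xi_1)$ --- note $g_\lambda$ does not depend on $u_1$, and $\xi_3$ enters only quadratically --- then integrate $\sin^4$) is exactly what the paper executes.
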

\begin{proof}
First note that $D^2_{\varphi,\lambda}(0,\lambda_0)w^{*}=-w^{*}_3e_3$. Then it holds that
\begin{equation*}
a=-\int_0^L|w^{*}_3(t)|^2\,\dt=-\int_0^L\sin^2\left(\frac{2\pi}{L}t\right)\,\dt=-\frac{L}{2}.
\end{equation*}
In order to calculate $b$ and $c$, we first write the operator $F$ in components. Note that we need only the first and the third component since $w^{*}_2=0$. By linearity of differentiation, it holds that
\begin{equation*}
\langle D^2_{\varphi,\varphi}F(0,\lambda)[w^*,w^*],e_1\rangle=\Big(D^2\partial_{\xi_1}g_{\lambda}(0,0)[(w^*,(w^*)^{\prime}),(w^*,(w^*)^{\prime})]\Big)^{\prime},
\end{equation*}
where the symbol $D^2$ on the right hand side denotes the Hessian of the scalar function $\partial_{\xi_1}g_{\lambda}$. Observe that $\partial_{\xi_1}g_{\lambda}(u,\xi)$ reads as
\begin{align*}
\partial_{\xi_1}g_{\lambda}(u,\xi)=&c_{12}\Big(\xi_1\sin(u_2)+\xi_3\Big)\sin(u_2)+c_{13}\Big(\xi_1\cos(u_2)\sin(u_3)-\xi_2\cos(u_3)+k\Big)\cos(u_2)\sin(u_3)
\\ &+c_{23}\Big(\xi_1\cos(u_2)\cos(u_3)+\xi_2\sin(u_3)\Big)\cos(u_2)\cos(u_3).
\end{align*}
Note that due to the fact that $w^{*}_2=0$, any higher order derivative of $\partial_{\xi_1}g_{\lambda}(u, \xi)$ with at least one derivative with respect to the variables $u_2$ or $\xi_2$ along the direction $w^{*}$ vanishes. Since $\partial_{\xi_3}\partial_{\xi_1}g_{\lambda}(u,\xi)=c_{12}\sin(u_2)$, the same reasoning allows to neglect any higher order derivatives with at least one derivative with respect to $\xi_3$. Finally, the function $\partial_{\xi_1}g_{\lambda}(u,\xi)$ is independent of the variable $u_1$. Summarizing we need to take into account only the derivatives of $\partial_{\xi_1}g_{\lambda}(u, \xi)$ with respect to $u_3$ and $\xi_1$. A straightforward calculation shows that
\begin{equation*}
\partial^2_{u_3}\partial_{\xi_1}g_{\lambda}(0,0)=\partial_{u_3}\partial_{\xi_1}^2g_{\lambda}(0,0)=\partial_{\xi_1}^3g_{\lambda}(0,0)=0,
\end{equation*}
so that it follows directly that
\begin{equation}\label{bfirstcomponent}
\langle D^2_{\varphi,\varphi}F(0,\lambda)[w^{*},w^{*}],e_1\rangle=0
\end{equation}
for all $\lambda$. For the third component we need to compute 
\begin{equation*}
\langle D^2_{\varphi,\varphi}F(0,\lambda)[w^*,w^*],e_3\rangle=\Big(D^2\partial_{\xi_3}g_{\lambda}(0,0)[(w^*,(w^*)^{\prime}),(w^*,(w^*)^{\prime})]\Big)^{\prime}-D^2\partial_{u_3}g_{\lambda}(0,0)[(w^*,(w^*)^{\prime}),(w^*,(w^*)^{\prime})].
\end{equation*}
Using the explicit expressions
\begin{align*}
\partial_{\xi_3}g_{\lambda}(u,\xi)=&c_{12}(\xi_1\sin(u_2)+\xi_3), 
 \\
 -\partial_{u_3}g_{\lambda}(u,\xi)=&-f\cos(u_2)\sin(u_3)
\\
&-c_{13}\left(\xi_1\cos(u_2)\sin(u_3)-\xi_2\cos(u_3)+k\right)\left(\xi_1\cos(u_2)\cos(u_3)+\xi_2\sin(u_3)\right)
\\
&-c_{23}\left(\xi_1\cos(u_2)\cos(u_3)+\xi_2\sin(u_3)\right)\left(-\xi_1\cos(u_2)\sin(u_3)+\xi_2\cos(u_3)\right)
\end{align*}
and arguing as for the first component, for second or higher order derivatives along the direction $w^{*}$ it suffices to consider partial derivatives with respect to the variables $u_3$ and $\xi_1$. Again we obtain
\begin{equation}\label{bthirdcomponent}
\langle D^2_{\varphi,\varphi}F(0,\lambda)[w^{*},w^{*}],e_3\rangle=0.
\end{equation}
for all $\lambda$. Combining \eqref{bfirstcomponent} and \eqref{bthirdcomponent} we obtain that $b=0$. 

In order to compute $c$, for the first component we need to take third order derivatives of $\partial_{\xi_1}g_{\lambda}(u, \xi)$ again only in the variables $u_3$ and $\xi_1$. Since $\partial_{\xi_1}g_{\lambda}$ is linear in $\xi_1$, only $\partial^3_{u_3}\partial_{\xi_1}g_{\lambda}$ and $\partial_{u_3}^2\partial_{\xi_1}^2g_{\lambda}$ are non-zero. One finds that
\begin{equation}\label{cfirstcomponent}
\begin{split}
&\partial_{u_3}^2\partial_{\xi_1}^2g_{\lambda}(0,0)=2(c_{13}-c_{23}),
\\
&\partial^3_{u_3}\partial_{\xi_1}g_{\lambda}(0,0)=-c_{13}k.
\end{split}
\end{equation}
Concerning the third component, again by the linear dependence on the components of the variable $\xi$, non-trivial contributions come only from the terms $\partial_{u_3}^4g_{\lambda},\partial_{\xi_1}\partial_{u_3}^3g_{\lambda}$ and $\partial_{\xi_1}^2\partial_{u_3}^2g_{\lambda}$ (and permutations of order). Here we have
\begin{equation}\label{cthirdcomponent}
\begin{split}
&-\partial_{\xi_1}^2\partial_{u_3}^2g_{\lambda}(0,0)=-2(c_{13}-c_{23}),
\\
&-\partial_{\xi_1}\partial_{u_3}^3g_{\lambda}(0,0)=c_{13}k,
\\
&-\partial_{u_3}^4g_{\lambda}(0,0)=f.
\end{split}
\end{equation}
Combining the formulas (\ref{cfirstcomponent}) and (\ref{cthirdcomponent}) we deduce by integration by parts that
\begin{align*}
c\cdot a=&\frac{1}{3}\int_0^L 12(c_{13}-c_{23})\left[((w^{*}_1)^{\prime}w^{*}_3)^2-6c_{13}k (w^{*}_1)^{\prime}(w^{*}_3)^3-\lambda_0(w^{*}_3)^4\right]\,\dt
\\
=&\left(\frac{4(c_{13}-c_{23})(c_{13}k)^2}{c^2_{23}}+\frac{6(c_{13}k)^2}{c_{23}}-\frac{\lambda_0}{3}\right)\int_0^L\sin^4\left(\frac{2\pi}{L}t\right)\,\dt
\\
=&\left(\frac{4(c_{13}-c_{23})(c_{13}k)^2}{c^2_{23}}+\frac{6(c_{13}k)^2}{c_{23}}-\frac{\lambda_0}{3}\right)\frac{3L}{8}.
\end{align*}
This finishes the proof upon dividing by $a$ and plugging in the value of $\lambda_0$.
\end{proof}

\section{Proof of the main results}
Endowed with the lemmas of the previous section, we can now state and prove our main results. The first one concerns existence of a bifurcation branch from the identity at the critical force, and follows directly from the Crandall-Rabinowitz Theorem \ref{CR} and Lemma \ref{bifurcation}. In the statement we add some additional information, that we can deduce from Lemma \ref{abc}. First of all, the component of the bifurcation branch corresponding to the force varies, at least when we are close enough to $(I, f_{crit})$ in a left neighborhood of $f_{crit}$.  Since loss of stability for the identity happens when the force is decreasing, this corresponds to a {\it supercritical} bifurcation. Furthermore, the nontrivial stationary points have lower energy than the identity, for the corresponding value of the force along the bifurcation curve. 
Referring to the notation of Lemma \ref{abc}, these two results follow (since $b=0$) from the fact that $c<0$.

\begin{proposition}\label{definebranch}
Let $E_0^f$ be the functional defined in \eqref{1d-energy}, let $f_{crit}=\frac{(c_{13}k)^2}{c_{23}}-\frac{4\pi^2c_{12}}{L^2}$ and assume \eqref{ass_0}.
Then there exist $\delta, \eta >0$, an open neighborhood $U$ of $I$ in $W^{1,2}((0,L), SO(3))$ and a curve $(R, f):(-\delta, \delta)\to U \times (f_{crit}-\eta,f_{crit}]$ satisfying
\begin{enumerate}
 \item[(i)] $(R, f)(0)=(I, f_{crit})$;
 \item[(ii)] for all $s\neq 0$, $R(s)\neq I$ is a stationary point of the energy \eqref{1d-energy} with $f=f(s)$ and the boundary conditions \eqref{clamped}.
 \item[(iii)] for all $s\neq 0$, $E_0^{f(s)}(R(s))<E_0^{f(s)}(I)$.
\end{enumerate}
It holds furthermore
\begin{align}\label{firstorder}
\lim_{s\to 0}\frac{R(s)-\mathcal G(sw^{*})}{s}=0
\end{align}
in $C^{2}([0,L], SO(3))$, where $w^{*}$ is defined in \eqref{kernel}.
\end{proposition}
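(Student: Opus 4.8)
The plan is to apply the Crandall--Rabinowitz theorem to the operator $F$ of \eqref{defoperator} at $\lambda_0=f_{crit}$, to read off the qualitative shape of the branch from the coefficients of Lemma \ref{abc}, and then to transport everything back to $W^{1,2}((0,L),SO(3))$ through the Cardan chart $\mathcal{G}$. Since $F\in C^\infty$ by Lemma \ref{diff} and the hypotheses of Theorem \ref{CR} hold with $v=w^{*}$ by Lemma \ref{bifurcation}, I would first obtain a complement $Z$ of ${\rm span}\{w^{*}\}$ in $C_0^2([0,L],\R^3)$ and $C^\infty$ maps $\psi:(-\delta,\delta)\to Z$, $\lambda:(-\delta,\delta)\to\R$ with $\psi(0)=0$, $\lambda(0)=0$, such that the nontrivial zeros of $F$ near $(0,f_{crit})$ are exactly the points $(\varphi(s),f(s))$ with $\varphi(s):=sw^{*}+s\psi(s)$ and $f(s):=f_{crit}+\lambda(s)$. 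Setting $R(s):=\mathcal{G}(\varphi(s))$, property (i) is immediate, and (ii) follows from Remark \ref{equivequations}: for $0<|s|<\delta$ small, $\varphi(s)=s(w^{*}+\psi(s))\in C_0^2([0,L],\R^3)$ is nonzero (because $\psi(s)\to0$ and $w^{*}\neq0$), solves \eqref{EL-angle} with $f=f(s)$, and vanishes at the endpoints, so $R(s)\neq I$ (by injectivity of $\mathcal{G}$ near the identity) is a stationary point of $E_0^{f(s)}$ subject to \eqref{clamped}. Continuity --- indeed smoothness --- of $s\mapsto(R(s),f(s))$ follows from the smoothness of $\psi,\lambda$ and of the superposition operator $\varphi\mapsto\mathcal{G}\circ\varphi$ from $C_0^2$ into $C^2$, so one may take $U$ to be an open $W^{1,2}$-neighbourhood of $I$ and shrink $\delta$ until $R((-\delta,\delta))\subset U$; the bound $f(s)\le f_{crit}$ and the value of $\eta$ are fixed in the next step.

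The heart of the matter is the direction of the branch, and here I would carry out the Lyapunov--Schmidt reduction of $F(\varphi,\lambda)=0$ along the lines of \cite[Section 5.4]{AmPr}: projecting onto $N(D_\varphi F(0,f_{crit}))={\rm span}\{w^{*}\}$ and onto $Rg(D_\varphi F(0,f_{crit}))$, solving the range equation by the implicit function theorem and substituting back, one reduces to a scalar bifurcation equation whose Taylor expansion at $(0,f_{crit})$ is controlled by the numbers $a,b,c$ of \eqref{a}--\eqref{c}. Since $b=0$ by Lemma \ref{abc} the bifurcation is of pitchfork type, and $\lambda(s)=c\,s^2+o(s^2)$ as $s\to0$. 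Because $c<0$ (again Lemma \ref{abc}), after shrinking $\delta$ we get $\lambda(s)<0$ for $0<|s|<\delta$, hence $f(s)<f_{crit}$; choosing $\eta>\sup_{|s|<\delta}|\lambda(s)|$ then yields $(R,f):(-\delta,\delta)\to U\times(f_{crit}-\eta,f_{crit}]$, as claimed. This is exactly the supercritical scenario, and it also supplies the first-order information on $\lambda$ needed below.

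For the energy inequality (iii) I would argue variationally. Denote by $\mathcal{E}(\varphi,\lambda):=E_0^\lambda(\mathcal{G}(\varphi))$ the energy \eqref{angleenergy} written in Cardan angles; it is $C^2$ in $\varphi\in W^{1,2}$ by \eqref{frechet}, affine in $\lambda$, with $\partial_\lambda\mathcal{E}(\varphi,\lambda)=-\int_0^L\cos\varphi_2\cos\varphi_3\,\dt$ (here $\varphi=(\varphi_1,\varphi_2,\varphi_3)$). Put $g(s):=\mathcal{E}(\varphi(s),f(s))-\mathcal{E}(0,f(s))=E_0^{f(s)}(R(s))-E_0^{f(s)}(I)$, so $g(0)=0$. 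By \eqref{minussign} and $F(\varphi(s),f(s))=0$ the function $\varphi(s)$ is a critical point of $\mathcal{E}(\cdot,f(s))$, so the $\varphi$-derivative drops when differentiating in $s$ and
\begin{equation*}
g'(s)=\Bigl(\,\int_0^L\bigl(1-\cos\varphi_2(s)\cos\varphi_3(s)\bigr)\,\dt\Bigr)\lambda'(s).
\end{equation*}
Since $w^{*}_2=0$ and $\psi(0)=0$, one has $\varphi_2(s)=O(s^2)$ and $\varphi_3(s)=s\,w^{*}_3+O(s^2)$ uniformly, and $\int_0^L(w^{*}_3)^2\,\dt=L/2$, so the first factor equals $\tfrac{L}{4}s^2+O(s^3)$ while $\lambda'(s)=2c\,s+o(s)$ from the previous step; hence $g'(s)=\tfrac{Lc}{2}s^3+o(s^3)$ and, integrating from $0$, $g(s)=\tfrac{Lc}{8}s^4+o(s^4)$, which is $<0$ for $0<|s|<\delta$ after one more shrinking of $\delta$, since $c<0$. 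Finally, \eqref{firstorder} is immediate: the superposition operator $\varphi\mapsto\mathcal{G}\circ\varphi$ is $C^\infty$, hence Lipschitz with some constant $\Lambda$ on a bounded $C_0^2$-neighbourhood of $0$ containing $sw^{*}$ and $\varphi(s)$ for $|s|$ small, so that $\|s^{-1}(R(s)-\mathcal{G}(sw^{*}))\|_{C^2}\le\Lambda\|\psi(s)\|_{C^2}\to0$.

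I expect the only genuinely delicate step to be the Lyapunov--Schmidt computation underlying $\lambda(s)=cs^2+o(s^2)$: one must identify the projections correctly, solve the range equation, and track the second- and third-order contributions --- in particular verify that, once $b=0$, the sole surviving cubic term paired against $w^{*}$ is the one defining $c$ in \eqref{c}. This is precisely where the higher Fréchet differentiability of $F$ (Lemma \ref{diff}) and the explicit values of Lemma \ref{abc} are used. Everything else --- invoking Theorem \ref{CR}, passing through the chart $\mathcal{G}$, and the elementary Taylor expansion for $g$ --- is routine.
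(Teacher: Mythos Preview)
Your proof is correct and follows the same overall strategy as the paper: apply Theorem \ref{CR} via Lemma \ref{bifurcation}, invoke the Lyapunov--Schmidt formulas from \cite[Section 5.4]{AmPr} together with Lemma \ref{abc} to obtain $\lambda'(0)=b=0$ and $\lambda''(0)=c<0$ (hence $f(s)<f_{crit}$ for $s\neq0$), and transport through $\mathcal{G}$. The only notable difference is in the argument for (iii). You Taylor--expand both factors in $g'(s)$ to arrive at a quantitative estimate $g(s)\sim\tfrac{Lc}{8}s^4$. The paper instead argues purely by sign: the integral factor equals $\int_0^L\langle(I-R(s)(t))e_1,e_1\rangle\,\dt$, whose integrand is nonnegative and not identically zero since $R(s)\neq I$; hence the sign of $g'(s)$ is that of $f'(s)$, i.e.\ positive for $s<0$ and negative for $s>0$, and $g(s)<g(0)=0$ follows immediately. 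The paper's route is slightly cleaner (no expansion of the integral factor needed), while yours yields a sharper asymptotic for the energy gap; both are valid. A minor slip: with the normalization in \eqref{a}--\eqref{c} one has $\lambda''(0)=c$, so $\lambda(s)=\tfrac{c}{2}s^2+o(s^2)$ rather than $cs^2+o(s^2)$, but this does not affect any conclusion since only the sign of $c$ is used.
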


\begin{proof}
Thanks to Lemma \ref{bifurcation} we can apply Theorem \ref{CR} with $\lambda_0=f_{crit}$ to the functional $F$ in \eqref{defoperator}. For $\lambda(s)$ and $\psi(s)$ as in the statement of Theorem \ref{CR}, we then set 
\[
(R(s), f(s)):=(\mathcal G(sw^{*}+s\psi(s)), f_{crit}+\lambda(s))\,.
\]
With this, property (ii) follows from Remark \ref{equivequations}, while (i) is trivially satisfied.
By smoothness of $\mathcal G$ and since $\psi(0)=0$ we also have
\[
\lim_{s\to 0}\frac{\mathcal G(sw^{*}+s\psi(s))-\mathcal G(sw^{*})}{s}=0\,,
\]
that is \eqref{firstorder}. Since \cite[Remark 4.3 (iv)]{AmPr} gives $\lambda^{\prime}(0)=b$ and $\lambda^{\prime\prime}(0)=c$ with $b$ and $c$ as in \eqref{b}-\eqref{c}, from Lemma \ref{abc} we have that $f^{\prime}(0)=0$ and $f^{\prime \prime}(0)<0$, which implies
\begin{align}\label{parabola}
sf^{\prime}(s)<0
\end{align}
for $s \neq 0$ in small neighborhood of $0$. This proves that, up to possibly taking a smaller $\delta$ than the one given by Theorem \ref{CR}, $f(s)$ takes values in a left neighborhood $(f_{crit}-\eta,f_{crit}]$ of $f_{crit}$. Finally, in order to prove (iii) we introduce the function $e(s)=E_0^{f(s)}(R(s))-E_0^{f(s)}(I)$. Then $e(0)=0$ and by (\ref{parabola}), differentiation and the fact that $R(s)$ is a stationary point, we obtain
\begin{align*}
\frac{d}{ds}e(s)&=\underbrace{\langle DE_0^{f(s)}(R(s)),\partial_sR(s)\rangle}_{=0}-f^{\prime}(s)\int_0^L\langle (R(s)(t)-I)e_1,e_1\rangle\,\dt
\\
&=f^{\prime}(s)\int_0^L\langle (I-R(s)(t))e_1,e_1\rangle\,\dt\;
\begin{cases}
>0 &\mbox{if $s<0$,}
\\
<0 &\mbox{if $s>0$,}
\end{cases}
\end{align*}
where we denoted with $\partial_sR(s)$ the tangent vector to the curve $s\mapsto R(s)$ and used that the integrand is always nonnegative and not identically equal to zero.
This proves the last assertion. 
\end{proof}

The conditions $f^{\prime}(0)=0$ and $f^{\prime \prime}(0)<0$ imply that at least two nontrivial stationary points near to the straight configuration appear for $f< f_{crit}$. Furthermore, the set of stationary points can be shown to contain exactly three elements, in a suitably small $W^{1,2}$-neighborhood of $I$. In general, this neighborhood can be {\it smaller} than the set $U$ provided in Proposition \ref{definebranch}, since we cannot exclude folds in the bifurcation curve.
We give a precise statement of this property, after proving a preliminary lemma. It allows us to show that stationary points close to $I$ in $W^{1,2}$ are indeed also $C^2$-close and must therefore lie on the bifurcation curve provided by Theorem \ref{CR}.

\begin{lemma}\label{strongcontinuity}
Assume that $(R_n)_n\subset W^{1,2}((0,L),SO(3))$ is a sequence of solutions to (\ref{E-L-ODE}) and that $R_n\to R$ strongly in $W^{1,2}((0,L),SO(3))$. Then $R_n\to R$ also in the norm-topology of $C^2([0,L],SO(3))$.
\end{lemma}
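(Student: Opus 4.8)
The plan is to use the ODE system \eqref{E-L-ODE} itself as a bootstrapping device, upgrading convergence one derivative at a time. First I would note that $R_n \to R$ in $W^{1,2}$ together with the Sobolev embedding $W^{1,2}((0,L)) \hookrightarrow C^0([0,L])$ already gives $R_n \to R$ uniformly, hence $A_n := R_n^T R_n' \wto A := R^T R'$ weakly in $L^2$ (a product of a strongly $L^2$-convergent factor $R_n'$ with a uniformly convergent, uniformly bounded factor $R_n^T$), and in particular the right-hand side of \eqref{E-L-ODE}, namely $c_{13}k\,\mathbf{C}^{-1}A_n e_2 - \mathbf{C}^{-1}A_n\mathbf{C}\w_{A_n} + f\mathbf{C}^{-1}(R_n^T e_1\times e_1)$, is bounded in $L^1$ (the quadratic term $A_n\mathbf{C}\w_{A_n}$ is bounded in $L^1$ since both factors are bounded in $L^2$). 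Thus $\w_{A_n}' $ is bounded in $L^1$, so $\w_{A_n}$ is bounded in $W^{1,1}$, hence precompact in $C^0$; since its weak-$L^2$ limit is $\w_A$, the full sequence converges: $\w_{A_n} \to \w_A$ uniformly, and therefore $R_n' = R_n A_n \to R A = R'$ uniformly, i.e. $R_n \to R$ in $C^1$.

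With $C^1$-convergence in hand, the right-hand side of \eqref{E-L-ODE} now converges \emph{uniformly} (every constituent is a continuous function of $R_n$ and $\w_{A_n}$, which converge uniformly), so $\w_{A_n}'$ converges uniformly, giving $\w_{A_n} \to \w_A$ in $C^1$; then $R_n' = R_n A_n$ and the identity $R_n'' = R_n A_n' + R_n A_n^2$ show $R_n \to R$ in $C^2$. This is exactly the claim. If one wants further regularity it continues indefinitely, but $C^2$ suffices here. The argument is essentially the same bootstrap used in Lemma \ref{regularity}, run with a convergence parameter.

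The main obstacle — really the only nontrivial point — is the first step: extracting \emph{uniform} (not merely weak) convergence of $\w_{A_n}$ from the a priori bound $\w_{A_n}' \in L^1$ bounded. This rests on the compact embedding $W^{1,1}((0,L)) \hookrightarrow\hookrightarrow C^0([0,L])$ (equivalently: an $L^1$ bound on the derivative gives equicontinuity via $|\w_{A_n}(t) - \w_{A_n}(s)| \le \int_s^t |\w_{A_n}'|$, plus uniform boundedness since $A_n = R_n^T R_n'$ is controlled in $W^{1,1}$ hence in $L^\infty$), together with the standard fact that a precompact sequence with a unique weak-$L^2$ accumulation point converges in full. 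After that, everything is a continuity statement and the iteration is purely mechanical.
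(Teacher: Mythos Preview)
Your argument is correct and follows the same bootstrap strategy as the paper's proof. The only difference is a minor detour in the first step: you deduce merely an $L^1$ \emph{bound} on $\omega_{A_n}'$ and then invoke compactness of $W^{1,1}\hookrightarrow C^0$ together with uniqueness of the weak limit, whereas the paper observes directly that $A_n=R_n^TR_n'\to A$ \emph{strongly} in $L^2$ (your own parenthetical justification already gives this, not just weak convergence), so that the right-hand side of \eqref{E-L-ODE} converges strongly in $L^1$ and hence $\omega_{A_n}\to\omega_A$ in $W^{1,1}$ without any compactness step. After that, both proofs proceed identically.
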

\begin{proof}
By (\ref{E-L-ODE}) the sequence $A_n^{\prime}$ converges strongly in $L^1$ and hence $R_n\to R$ in $W^{2,1}((0,L),SO(3))$. Again by iteration and Sobolev embedding we find that $R_n\to R$ in $C^2([0,L],SO(3))$.	
\end{proof}

\begin{proposition}\label{isolated}
Let $E_0^f$ be defined as in (\ref{1d-energy}). There exist $\hat\eta>0$ and an open neighborhood $O$ of $I$ in $W^{1,2}((0,L), SO(3))$ such that for all $f\in (f_{crit}-\hat\eta, f_{crit})$ it holds
\begin{equation*}
\#\{R\in O\cap\mathcal{A}:\;R\text{ is a stationary point of }E_0^f\text{ in the sense of }\eqref{weakform}\}= 3.
\end{equation*}	
\end{proposition}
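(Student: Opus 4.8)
The strategy is to match the local description of $F^{-1}(0)$ in the $C^2$-topology furnished by Theorem \ref{CR} with the a priori regularity of stationary points, so as to also control stationary points that are merely $W^{1,2}$-close to $I$. \emph{Step 1 (counting on the bifurcation curve).} Since $F\in C^\infty$ (Lemma \ref{diff}), Theorem \ref{CR} produces $C^\infty$ maps $\lambda,\psi$; by \cite[Remark 4.3 (iv)]{AmPr} together with Lemma \ref{abc} one has $\lambda(0)=\lambda'(0)=0$ and $\lambda''(0)=c<0$, so, after shrinking $\delta$, $\lambda$ is strictly increasing on $(-\delta,0)$ and strictly decreasing on $(0,\delta)$ with strict maximum $\lambda(0)=0$. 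Hence there is $\hat\eta_0>0$ such that for every $\lambda_\ast\in(-\hat\eta_0,0)$ the equation $\lambda(s)=\lambda_\ast$ has exactly two solutions $s_-(\lambda_\ast)\in(-\delta,0)$ and $s_+(\lambda_\ast)\in(0,\delta)$, both tending to $0$ as $\lambda_\ast\to0^-$. Writing $\sigma(s)=sw^{*}+s\psi(s)$, the curve $s\mapsto\sigma(s)$ is injective on $(-\delta,\delta)$ (test against a continuous linear functional $\ell$ on $C_0^2([0,L],\R^3)$ with $\ell(w^{*})=1$: $\frac{d}{ds}\ell(\sigma(s))\to1$ as $s\to0$, so $s\mapsto\ell(\sigma(s))$ is strictly monotone near $0$). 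Now set $\mathcal N:=\mathcal G(\tilde U)$, where $\tilde U$ is the $C_0^2$-ball on which \eqref{intorno} holds, shrunk so that $\sigma((-\delta,\delta))\subset\tilde U$ and so that $(\varphi,f)$ stays in the domain of $F$ whenever $\varphi\in\tilde U$ and $|f-f_{crit}|<\hat\eta_0$. By Lemmas \ref{regularity}, \ref{inverse} and Remark \ref{equivequations}, the stationary points of $E_0^f$ in $\mathcal N\cap\mathcal A$ are exactly the images under $\mathcal G$ of the zeros of $F(\cdot,f)$ in $\tilde U$, and by \eqref{intorno} together with the monotonicity above these are, for $f\in(f_{crit}-\hat\eta_0,f_{crit})$, precisely $I$, $R(s_-):=\mathcal G(\sigma(s_-))$ and $R(s_+):=\mathcal G(\sigma(s_+))$ — three distinct points, by injectivity of $\mathcal G\circ\sigma$ and Proposition \ref{definebranch}(ii).

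\emph{Step 2 (from $W^{1,2}$- to $C^2$-closeness).} I claim there is a $W^{1,2}$-neighborhood $O$ of $I$ such that every stationary point in $O\cap\mathcal A$ of every $E_0^f$ with $f\in(f_{crit}-\hat\eta_0,f_{crit})$ already lies in $\mathcal N$. If not, there are stationary points $R_m$ of $E_0^{f_m}$ with $f_m\in(f_{crit}-\hat\eta_0,f_{crit})$ such that $R_m\to I$ strongly in $W^{1,2}$ but $R_m\notin\mathcal N$. By Lemma \ref{regularity} each $R_m$ is smooth and solves \eqref{E-L-ODE} with $f=f_m$; since $f_m$ ranges in a bounded interval we may assume $f_m\to f_\ast$, and then the bootstrap argument in the proof of Lemma \ref{strongcontinuity} applies verbatim (the term $f_m\mathbf C^{-1}(R_m^T e_1\times e_1)$ converges in $L^\infty$), yielding $R_m\to I$ in $C^2([0,L],SO(3))$ — contradicting $R_m\notin\mathcal N$.

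\emph{Step 3 (conclusion) and main obstacle.} Since $R(s_\pm(\lambda_\ast))\to I$ in $C^2$, hence in $W^{1,2}$, as $\lambda_\ast\to0^-$, we may choose $\hat\eta\in(0,\hat\eta_0]$ so that $R(s_\pm(f-f_{crit}))\in O$ for all $f\in(f_{crit}-\hat\eta,f_{crit})$. For such $f$, Step 2 gives that any stationary point in $O\cap\mathcal A$ lies in $\mathcal N\cap\mathcal A$, hence by Step 1 equals one of $I,R(s_-),R(s_+)$; conversely these three are stationary points of $E_0^f$ lying in $O\cap\mathcal A$. Therefore the cardinality is exactly $3$, which is the assertion. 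The crux of the argument is Step 2: Theorem \ref{CR} describes $F^{-1}(0)$ only in the $C^2$-topology, so one must rule out, \emph{uniformly} in the subcritical force, stationary points that are $W^{1,2}$-close but not $C^2$-close to $I$; this is precisely where the a priori regularity (Lemma \ref{regularity}) and the compactness/bootstrap of Lemma \ref{strongcontinuity}, extended to a varying but bounded force, are indispensable.
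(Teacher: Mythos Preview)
Your proof is correct and follows essentially the same strategy as the paper's: use the regularity of stationary points (Lemmas \ref{regularity} and \ref{strongcontinuity}) to upgrade $W^{1,2}$-closeness to $C^2$-closeness, invoke the local description \eqref{intorno} from Crandall--Rabinowitz, and count via $\lambda''(0)=c<0$. The only difference is organizational: where the paper argues by contradiction---assuming four stationary points, pushing them onto the bifurcation curve, and applying Rolle's theorem twice to contradict $f''(0)<0$---you argue directly via the strict monotonicity of $\lambda$ on each side of $0$; your Step 2 is in fact slightly more explicit than the paper about the fact that Lemma \ref{strongcontinuity} still applies when the force $f_m$ varies in a bounded set.
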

\begin{proof}
Let $\eta$ be as in Proposition \ref{definebranch}. Assume by contradiction that for each $n$ there exist $R^i_n\in\mathcal{A}$ ($i=1,\dots,4$) and $f_n$ such that $f_n \in (f_{crit}-\eta, f_{crit})$, $\|R^i_n-I\|_{W^{1,2}}\leq\frac{1}{n}$ and $R^i_n$ solves \eqref{weakform}. By Lemma \ref{regularity} we know that $R^i_n\in C^{\infty}([0,L],SO(3))$ and therefore it solves the strong form \eqref{E-L-ODE}. Therefore, Lemma \ref{strongcontinuity} implies that $R^i_n\to I$ also in the $C^2$-topology. Using Lemma \ref{inverse} we deduce that, for $n$ large enough, $(\mathcal G^{-1}(R^i_n), f_n)$ belongs to the neighborhood $U$ in \eqref{intorno}. This latter further implies that either $R^{i}_n=I$, or it must hold
$(R^i_n, f_n)=(R,f)(s^i_n)$ for $s^i_n \in (-\delta, \delta)$, where $(R,f)(s)$ is the curve and $(-\delta, \delta)$ is the interval provided by Proposition \ref{definebranch}. In particular, $f(s^i_n)$ takes the same value $f_n$ for all the $i$'s such that $R^i_n\neq I$, that is at least three times. It is not restrictive to assume that this is the case for $i=1$, $i=2$, and $i=3$.

We now fix $i \in \{1,2,3\}$, so that $R^i_n \neq I$. Since $R(s)=\mathcal G(sw^{*}+s\psi(s))$, $\psi(s)$ takes values in the topological complement $Z$ of ${\rm span}\{w^{*}\}$, and we have by construction $R^i_n\to I$ in $W^{1,2}((0,L), SO(3))$, it must hold $s^i_n\to 0$ when $n\to +\infty$. 
Since $f(s^1_n)=f(s^2_n)=f(s^3_n)$, applying for instance Rolle's theorem twice, we can then construct a sequence $\hat{s}_n \to 0$ with $f^{\prime\prime}(\hat{s}_n)\to 0$. Since $f$ is smooth by Lemma \ref{diff} and Theorem \ref{CR}, this contradicts the fact that $f^{\prime\prime}(0)<0$ proved in Proposition \ref{definebranch}. With this, for $n_0$ large enough, setting
\begin{align*}
O:=\left\{R \in W^{1,2}((0,L), SO(3)): \|R-I\|_{W^{1,2}} < \frac{1}{n_0}\right\}\, 
\end{align*}
we get
\begin{align}\label{inequality}
\#\{R\in O\cap\mathcal{A}:\;R\text{ is a stationary point of }E_0^f\text{ in the sense of }\eqref{weakform}\}\le 3\,.
\end{align}
Since $f$ is smooth, monotone decreasing for $s>0$ and monotone increasing for $s<0$ by \eqref{parabola}, there exists $0<\hat\eta\le \eta$ such that $f(s)$ takes each value in $(f_{crit}-\hat\eta, f_{crit})$ twice in a neighborhood of $0$. Then, the converse inequality to \eqref{inequality} follows from Proposition \ref{definebranch} (ii). 
\end{proof}

Relying on Theorem \ref{CRII} and Lemma \ref{abc}, we now prove that the curve $R(s)$ provided by Proposition \ref{definebranch} consists, at least for $|s|$ small enough, of {\it  isolated local minimizers} of the energy \eqref{1d-energy} for $f=f(s)$.
\begin{theorem}\label{localopt}
Let $E_0^f$ be the functional defined in \eqref{1d-energy} and let $f_{crit}=\frac{(c_{13}k)^2}{c_{23}}-\frac{4\pi^2c_{12}}{L^2}>0$ and assume \eqref{ass_0}.
Then there exist $\delta, \eta >0$, an open neighborhood $U$ of $I$ in $W^{1,2}((0,L), SO(3))$ and a curve $(R, f):(-\delta, \delta)\to U \times (f_{crit}-\eta,f_{crit}]$ satisfying
\begin{enumerate}
 \item[(i)] $(R, f)(0)=(I, f_{crit})$;
 \item[(ii)] for all $s\neq 0$, $R(s)\neq I$ is an isolated local minimizer of the energy \eqref{1d-energy} with $f=f(s)$ and the boundary conditions \eqref{clamped}.
 \item[(iii)] for all $s\neq 0$, $E_0^{f(s)}(R(s))<E_0^{f(s)}(I)$.
\end{enumerate}
Furthermore, $R(s)$ satisfies \eqref{firstorder}.
\end{theorem}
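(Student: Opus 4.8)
The plan is as follows. Properties (i), (iii) and the expansion \eqref{firstorder} are already furnished by Proposition \ref{definebranch}, with the same constants $\delta,\eta$, neighbourhood $U$ and curve $(R,f)$ (possibly after shrinking $\delta$), so the only new point to prove is (ii), namely that for $0<|s|$ small $R(s)$ is an isolated local minimizer of $E_0^{f(s)}$. Using the Cardan chart $\mathcal G$ from Lemma \ref{inverse} — a $W^{1,2}$-homeomorphism of a neighbourhood of $I$ in $\mathcal A$ onto a neighbourhood of $0$ in $W^{1,2}_0((0,L),\R^3)$, which conjugates $E_0^f$ to the angular energy \eqref{angleenergy} — this is equivalent to showing that the angle $\varphi(s):=\mathcal G^{-1}(R(s))=sw^{*}+s\psi(s)$ is a strict local minimizer of $E_0^{f(s)}$ on $W^{1,2}_0$ and is isolated among critical points.

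I would first reduce (ii) to a statement about the second variation. Since $E_0^f\in C^2(W^{1,2}_0,\R)$ by \eqref{frechet} and the integrand $g_f(u,\xi)$ is quadratic in $\xi$ with $\partial^2_\xi g_f(0,0)=\mathbf C>0$, the Legendre condition $\partial^2_\xi g_f\ge c_0 I$ holds uniformly along $\varphi(s)$ for $s$ small (recall $\varphi(s)\to0$ in $C^2$). A Gårding inequality together with weak lower semicontinuity of the convex $\xi$-part of the second variation and the compact embedding $W^{1,2}_0\hookrightarrow C^0$ then shows that positive definiteness of $D^2E_0^{f(s)}(\varphi(s))$ on $W^{1,2}_0$ self-improves to $\inf\{D^2E_0^{f(s)}(\varphi(s))[w,w]:\|w\|_{W^{1,2}}=1\}>0$; continuity of the Hessian in $W^{1,2}$ and a second-order Taylor expansion at the critical point $\varphi(s)$ then give a strict local minimum, and nondegeneracy of the Hessian (alternatively, Proposition \ref{isolated}) makes it isolated among critical points. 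So the whole matter reduces to proving: \emph{$D^2E_0^{f(s)}(\varphi(s))$ is positive definite for $0<|s|$ small.}

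To this end I would argue by contradiction. Because $D_\varphi F$ represents minus the second variation (differentiate \eqref{minussign}) and $D_\varphi F(\varphi(s),f(s))$ is Fredholm of index zero (a compact perturbation of $\mathbf C w''$, as in Lemma \ref{bifurcation}), failure of positive definiteness yields a non-positive eigenvalue of the Jacobi eigenvalue problem $D_\varphi F(\varphi(s),f(s))w=\mu Kw$, where $K$ is the compact inclusion $C_0^2([0,L],\R^3)\hookrightarrow C([0,L],\R^3)$; equivalently, writing $\nu_1(s)\le\nu_2(s)\le\cdots$ for the eigenvalues of $D^2E_0^{f(s)}(\varphi(s))$ relative to the $L^2$-form (the $W^{1,2}$-eigenfunctions are smooth by the bootstrap of Lemma \ref{regularity}, so the two spectra match up to sign), failure means $\nu_1(s)\le0$. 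At $s=0$: by Lemma \ref{bifurcation} the zero-eigenspace is $\mathrm{span}\{w^{*}\}$, and by Theorem \ref{idstable?} the identity is a strict local minimizer for every $f>f_{crit}$, so $D^2E_0^f(0)\ge0$ there and, letting $f\downarrow f_{crit}$, $D^2E_0^{f_{crit}}(0)\ge0$; hence $\nu_1(0)=0$ is a simple eigenvalue and $\nu_2(0)>0$. Moreover $Kw^{*}=w^{*}\notin Rg(D_\varphi F(0,f_{crit}))$, since by \eqref{complement} the range is the annihilator of $w^{*}$ while $\langle w^{*},w^{*}\rangle=\int_0^L|w^{*}|^2>0$; thus $0$ is a $K$-simple eigenvalue of $D_\varphi F(0,f_{crit})$ and Theorem \ref{CRII} together with Lemma \ref{quasiuniqueness} apply along the branch, yielding $C^1$ maps $\mu,w$ with $\mu(0)=0$, $w(0)=w^{*}$, $D_\varphi F(\varphi(s),f(s))w(s)=\mu(s)Kw(s)$, and — $\mu(s)$ being the unique eigenvalue near $0$ — $\nu_1(s)=-\mu(s)$ for $s$ small. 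By Theorem \ref{CRII}, $\mu(s)$ has the sign of $-s\lambda'(s)\gamma'(f_{crit})$. Now $\gamma'(f_{crit})<0$: differentiating $D_\varphi F(0,\lambda)u(\lambda)=\gamma(\lambda)Ku(\lambda)$ at $f_{crit}$ and testing against $w^{*}$ kills the range term by \eqref{orthogonal} and gives $\gamma'(f_{crit})=a/\|w^{*}\|_{L^2}^2=-\tfrac{L}{2}/\|w^{*}\|_{L^2}^2<0$ by Lemma \ref{abc}; and $s\lambda'(s)=sf'(s)<0$ for $0<|s|$ small by \eqref{parabola}. Hence $\mu(s)<0$, i.e. $\nu_1(s)>0$; combined with $\nu_2(s)>\tfrac12\nu_2(0)>0$ for small $s$ — which holds by continuity of the spectrum, the forms $D^2E_0^{f(s)}(\varphi(s))$ converging to $D^2E_0^{f_{crit}}(0)$ since $\varphi(s)\to0$ in $C^2$ — this forces $D^2E_0^{f(s)}(\varphi(s))>0$, contradicting the assumption. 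This proves (ii).

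I expect the main obstacle to be the spectral bookkeeping in the last step: converting "$D^2E_0^{f(s)}(\varphi(s))$ not positive definite" into a \emph{genuine} eigenvalue problem on which Theorem \ref{CRII} bites requires reconciling the $C^2$-category in which the abstract bifurcation theorems are stated with the $W^{1,2}$-category in which local minimality is meaningful (handled via the regularity results, Lemmas \ref{regularity}, \ref{strongcontinuity}, \ref{inverse}), and, crucially, controlling \emph{all} eigenvalues near $0$ and not merely the one tracked by Theorem \ref{CRII} — this is where Theorem \ref{idstable?} enters, to pin down that $\nu_1(0)=0$ is simple and $\nu_2(0)>0$, together with continuity of the spectrum. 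The sign computation $\gamma'(f_{crit})<0$, paired with $sf'(s)<0$ from the supercriticality already established in Proposition \ref{definebranch}, is then what closes the loop.
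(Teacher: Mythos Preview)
Your proposal is correct and follows essentially the same route as the paper: reduce (ii) via Cardan angles to strict positivity of $D^2E_0^{f(s)}(\varphi(s))$ on $W^{1,2}_0$, argue by contradiction to produce a nonpositive Jacobi eigenvalue, identify it (via Lemma~\ref{quasiuniqueness}) with the eigenvalue curve $\mu(s)$ of Theorem~\ref{CRII}, and then use \eqref{parabola} together with the sign of $\gamma'(f_{crit})$ to force $\mu(s)<0$.

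The only noteworthy difference is how the sign of $\gamma'(f_{crit})$ is obtained. You compute it explicitly: differentiating $D_\varphi F(0,\lambda)u(\lambda)=\gamma(\lambda)Ku(\lambda)$ and pairing with $w^{*}$ gives $\gamma'(f_{crit})=a/\|w^{*}\|_{L^2}^2<0$ by Lemma~\ref{abc}. The paper instead argues indirectly, deducing $\gamma'(\lambda_0)\le 0$ from the positive definiteness of $D^2E_0^f(0)$ for $f>f_{crit}$ (Theorem~\ref{idstable?}) and then $\gamma'(\lambda_0)\neq 0$ from Theorem~\ref{CRII}. Your explicit computation is a clean shortcut. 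Conversely, the paper is somewhat more explicit than you about the step converting ``not positive definite'' into an eigenpair close to $(0,w^{*})$: rather than invoking abstract spectral continuity, it constructs the eigenpair via the constrained problem $\inf\{D^2E_0^{f(s)}(\varphi(s))[w,w]:\|w\|_{L^2}=1\}$, shows directly that the resulting multiplier $\bar\mu(s)\to 0$ and $\bar w(s)\to w^{*}/\|w^{*}\|$ in $W^{1,2}$, and only then invokes Lemma~\ref{quasiuniqueness}. Both arguments are sound; yours is more conceptual, the paper's more hands-on.
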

\begin{proof}
Considering the curve $(R(s), f(s))$ provided by Proposition \ref{definebranch}, we only have to prove that, up to possibly reducing the interval $(-\delta, \delta)$, property (ii) is satisfied. Setting $\varphi(s):=\mathcal G^{-1}(R(s))$ and considering the auxiliary energy $E^{f(s)}_0(\varphi(s))$ defined in \eqref{angleenergy}, taking into account \eqref{frechet} and Remark \ref{equivequations} it suffices to prove that, for $|s|$ small enough, there exists $c(s)>0$ such that
\begin{equation*}
D^2E^{f(s)}_0(\varphi(s))[w,w]\geq c(s)\|w\|^2_{W^{1,2}}
\end{equation*}
for all $w\in W^{1,2}_0((0,L),\R^3)$. 
In order to prove the strict positivity of the second differential of the energy, we argue by contradiction. We fix $s$ and assume that for each $n\in\mathbb{N}$ there exists $w_n\in W^{1,2}_0((0,L),\R^3)$ such that $\|w_n\|_{W^{1,2}}=1$ and $D^2E_0^{f(s)}(\varphi(s))[w_n,w_n]\leq \frac{1}{n}$. Passing to a subsequence we have that $w_n\rightharpoonup w$ in $W_0^{1,2}((0,L),\R^3)$. Observe that
\begin{align}\label{secondder}
D^2E_0^{f(s)}(\varphi(s))[w_n,w_n]=&\int_0^L\partial^2_{\xi}g_{f(s)}(\varphi(s),\partial_t \varphi(s))[w_n^{\prime},w_n^{\prime}]\,\dt\nonumber
\\
&+\int_0^L\partial^2_ug_{f(s)}(\varphi(s),\partial_t \varphi(s))[w_n,w_n]+2\partial_{\xi}\partial_{u}g_{f(s)}(\varphi(s),\partial_t \varphi(s))[w_n,w_n^{\prime}]\,\dt\,,
\end{align}
where for the sake of notation we denote with $\partial_t \varphi(s)$ the derivative of the absolutely continuous function $\varphi(s)$ with respect to $t\in [0,L]$. The last integral in \eqref{secondder} is continuous with respect to weak convergence of $w_n$ in $W^{1,2}((0,L),\R^3)$ by uniform convergence of $w_n$ and weak-convergence properties of products. Considering the first integral, it holds that
\begin{equation}\label{elliptic}
\partial^2_{\xi}g_{f(s)}(\varphi(s),\partial_t \varphi(s))\to \mathbf{C}
\end{equation}
uniformly on $[0,L]$ when $s\to 0$, so that the associated quadratic form becomes globally positive definite for $|s|$ small enough. We conclude that $w\mapsto D^2E_0^{f(s)}(\varphi(s))[w,w]$ is weakly lower semicontinuous on $W^{1,2}_0((0,L),\R^3)$, provided $|s|$ is small enough. Consequently 
\begin{equation}\label{nonpositive}
D^2E_0^{f(s)}(\varphi(s))[w,w]\leq 0.
\end{equation}
We argue that $w\neq 0$. Indeed, when $w=0$ then $w_n\to 0$ uniformly on $[0,L]$ and $w_n^{\prime}\rightharpoonup 0$ weakly in $L^2((0,L))$. Passing to the limit in the formula \eqref{secondder} and using the weak continuity of the second integral, from our assumptions on $w_n$ we infer that
\begin{equation*}
0\geq \limsup_n\int_0^L\partial^2_{\xi}g_{f(s)}(\varphi(s),\partial_t \varphi(s))[w_n^{\prime},w_n^{\prime}]\,\dt.
\end{equation*}
Since $\partial^2_{\xi}g_{f(s)}(\varphi(s),\partial_t \varphi(s))$ is globally positive definite, the above inequality implies that $w_n\to 0$ strongly in $W^{1,2}((0,L),\R^3)$ which contradicts the normalization of $\|w_n\|_{W^{1,2}}$. Thus $w\neq 0$.

Now consider the auxiliary problem
\begin{equation*}
\inf\{D^2E_0^{f(s)}(\varphi(s))[w,w]:\;w\in W^{1,2}_0((0,L),\R^3),\,\|w\|^2_{L^2}=1\}.
\end{equation*}   
Again for small $|s|$ weak lower semicontinuity implies that the above problem has a solution. Combining (\ref{nonpositive}) with rescaling, we know that any minimizer $\bar{w}(s)$ satisfies
\begin{equation}\label{sign}
D^2E_0^{f(s)}(\varphi(s))[\bar{w}(s),\bar{w}(s)]\leq 0.
\end{equation}
By constrained minimality and symmetry of the second variation there exists a Lagrange-multiplier $\bar{\mu}(s)\in\R$ such that, for all $g\in W^{1,2}_0((0,L),\R^3)$, 
\begin{equation}\label{lagrange}
D^2E_0^{f(s)}(\varphi(s))[\bar{w}(s),g]+\bar{\mu}(s)\int_0^L\langle\bar{w}(s),g\rangle\,\dt=0.
\end{equation}
Choosing $g=\bar{w}(s)$ in the above inequality we deduce from (\ref{sign}) that 
\begin{equation}\label{signmu}
\bar{\mu}(s)=-D^2E_0^{f(s)}(\varphi(s))[\bar{w}(s),\bar{w}(s)]\geq 0.
\end{equation}
Note that by \eqref{minussign} the equation \eqref{lagrange} is the weak formulation of the following system:
\begin{equation}\label{stronglagrange}
D_{\varphi}F(\varphi(s),f(s))\bar{w}(s)=\bar{\mu}(s) \bar{w}(s),
\end{equation} 
where the operator $D_{\varphi}F(\varphi(s),f(s))$ is given explicitly in \eqref{derivativeoperator}. By \eqref{elliptic}, for $|s|$ small enough, the matrix $\partial^2_{\xi}g_{f(s)}(\varphi(s)(t),\partial_t\varphi(s)(t))$ is invertible for all $t\in [0,L]$. Furthermore, the inverse matrix depends smoothly on $t$, since, for all $s$, the function $t\mapsto\varphi(s)(t)\in C^{\infty}([0,L],\R^3)$ by Remark \ref{equivequations}. Then by a similar bootstrap argument as for Lemma \ref{regularity} one proves that $\bar{w}(s)\in C^{\infty}([0,L],\R^3)$. Hence we can replace \eqref{lagrange} by its strong formulation \eqref{stronglagrange}. 

Now we want to apply Theorem \ref{CRII} with $\lambda_{0}=f_{crit}$.  To this end we set $K:C^2_0([0,L],\R^3)\to C([0,L],\R^3)$ as the natural embedding $K\varphi=\varphi$. Note that the assumption that $0$ is a $K$-simple eigenvalue has been verified in the proof of Lemma \ref{bifurcation}. Having in mind Lemma \ref{quasiuniqueness}, we first prove that $\bar{\mu}(s)\to 0$ when $s\to 0$. Rewriting \eqref{signmu} via
\begin{align*}
\bar{\mu}(s)=(D^2E_0^{f(0)}(0)-D^2E_0^{f(s)})(\varphi(s))[\bar{w}(s),\bar{w}(s)]-D^2E_0^{f(0)}(0)[\bar{w}(s),\bar{w}(s)],
\end{align*}
we observe that the first difference vanishes by \eqref{frechet} and the convergence properties of $s\mapsto(\varphi(s),f(s))$. Since $D^2E_0^{f(0)}(0)$ is positive semidefinite and $\bar{\mu}(s)\geq 0$, we deduce from the above equality that $\bar{\mu}(s)\to 0$ when $s\to 0$. \\
The convergence of $\mu(s)$ to $0$ eventually implies
\[
\lim_{s\to 0}D^2E_0^{f(0)}(0)[\bar{w}(s),\bar{w}(s)]=0
\]
The general expression for $D^2E_0^{f(0)}(0)[w,w]$ can be easily computed from, e.g.\ , \eqref{secondder}, inserting $\varphi(0)=0$. We have
\begin{align*}
D^2E_0^{f(0)}(0)[w,w]=\int_0^L\langle \mathbf{C}w^{\prime},w^{\prime}\rangle-c_{13}k(w_1w_3^{\prime}-w_1^{\prime}w_3)+f_{crit}(w_2^2+w_3^2)\,\mathrm{d}t\,.
\end{align*}
Since $\mathbf{C}$ is positive definite and $\frac{w^{*}}{\|w^{*}\|_2}$ is the unique normalized vector in the nullspace of $D^2E_0^{f(0)}(0)$, a standard argument implies then that $\bar{w}(s)$ converges strongly in $W^{1,2}_0$ to $\frac{w^{*}}{\|w^{*}\|_2}$. Writing
\[
\bar{w}(s)=c_1(s)w^*+c_2(s)z(s)
\]
with $c_1(s)$, $c_2(s)\in \R$ and $z(s)$ in the topological complement $Z$ of $w^*$, it follows that $c_1(s)$ stays bounded away from $0$, so that (up to possibly dividing by $c_1(s)$), we can assume that $\bar{w}(s)-w^* \in Z$. With this, Lemma \ref{quasiuniqueness} implies that $\bar \mu(s)$ and $\bar w(s)$ coincide with the curves $\mu(s)$ and $w(s)$ solving \eqref{eigenprob}, for $K$ being the natural embedding of $C^2_0([0,L],\R^3)$ into $ C([0,L],\R^3)$.

Moreover, for every possible curve of eigenvalues $\gamma(\lambda)$ of $D_{\varphi}F(0,\lambda)$ we have $\gamma^{\prime}(\lambda_0)\leq 0$ as the second variation of the energy $E_0^f$ is positive definite for $f>\lambda_0$ (see Theorem \ref{idstable?}). Here we remark that the operator $-F$ corresponds to the first differential of the energy $E_0^f$ in the sense of (\ref{minussign}), so that one has to take care of a sign change. By Theorem \ref{CRII} we conclude that $\gamma^{\prime}(\lambda_0)<0$. By \eqref{parabola} we know that $-s\lambda^{\prime}(s)=-sf^{\prime}(s)>0$ for $|s|$ small enough, and consequently Theorem \ref{CRII} and Remark \ref{quasiuniqueness} imply $\bar{\mu}(s)<0$. This contradicts (\ref{signmu}).
\end{proof}

\begin{remark}\label{shape}
The local minimizers $R(s)$ are hemihelical minimizers in the following sense. According to \eqref{firstorder} they are, for $|s|<<1$ (that is for $f\approx f_{crit}$), close in the $C^2$ topology to the function $t\mapsto \mathcal{G}(sw^{*})(t)$. This one shows clearly an hemihelical shape with the occurrence of a perversion. In Figure \ref{fig1} we plot the shape of this asymptotic expansion. 
\end{remark}

\begin{figure}[h]
\includegraphics[trim=5cm 9cm 2cm 9cm, scale=0.5]{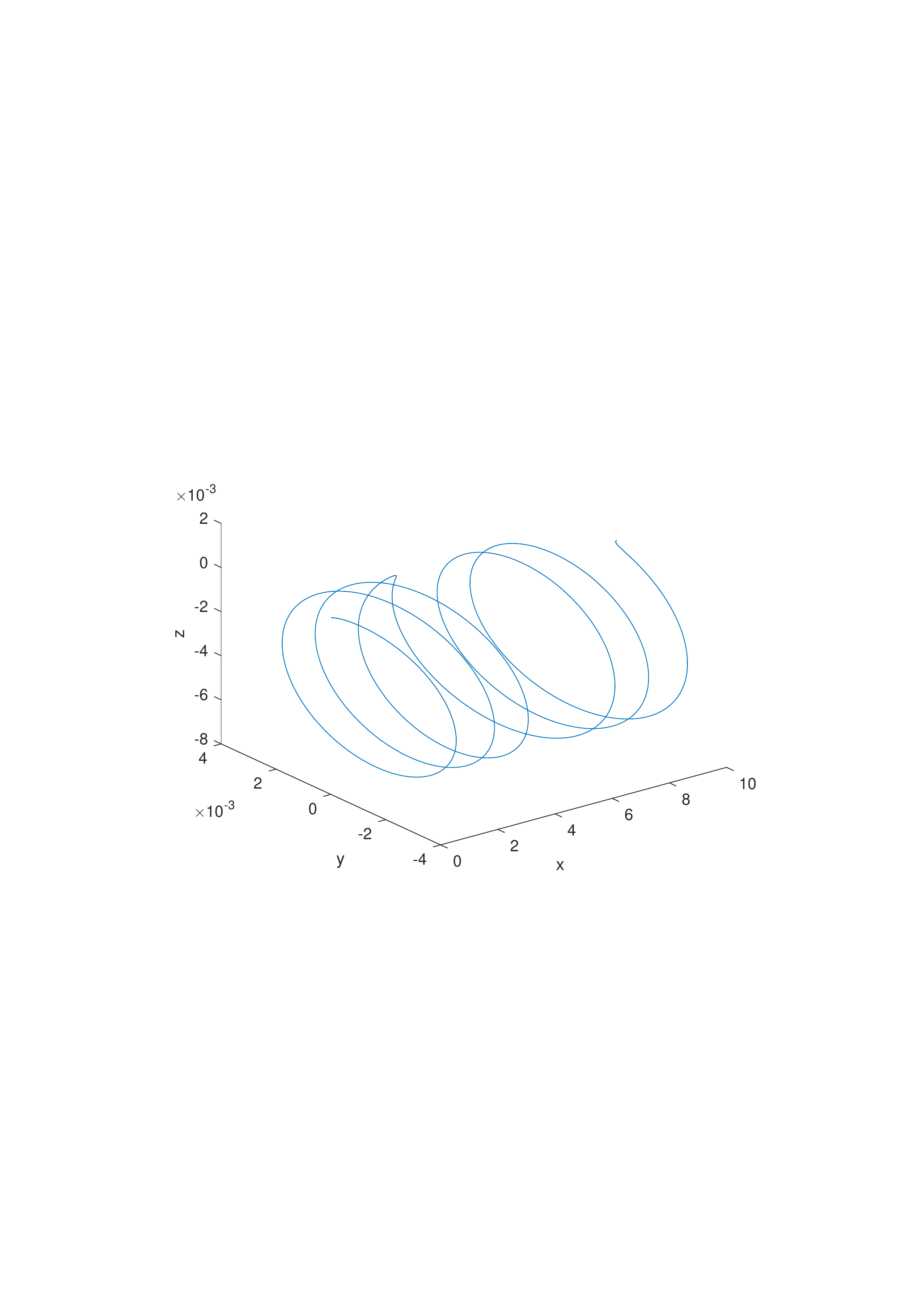}%
\caption{Shape of the centerline of the function $\mathcal{G}(sw^*)$. The parameters are given by $c_{12}=4.0848$, $c_{13}=0.0065$ and $c_{23}=0.0087$. The intrinsic curvature is $k=375$ and the critical force is $f_{crit}=687$. We chose $s=0.02$.}
	\label{fig1}
\end{figure}

Our final result states that hemihelical local minimizers appear indeed as limits of local minimizers of the $3$-dimensional energy \eqref{defenergy} as the size of the cross-section vanishes. 
\begin{theorem}\label{hemihelix}
Assume that the energy 	$E_{h}$ defined in (\ref{defenergy}) is lower-semicontinuous with respect to weak convergence in $W^{1,2}(\Omega,\R^3)$, and consider $E_0^f$ as in \eqref{1d-energy}. Set further $f_{crit}=\frac{(c_{13}k)^2}{c_{23}}-\frac{4\pi^2c_{12}}{L^2}$ and assume \eqref{ass_0}. Fix an arbitrary open neighborhood $U$ of $I$ in $W^{1,2}((0,L), SO(3))$. 
\\
Then there exists $\eta>0$ such that, for all $f\in (f_{crit}-\eta, f_{crit})$, we may find two nontrivial local minimizers $R_f^1, R_f^2 \in U$ of $E_0^f$ and two sequences $v^1_{h}$, $v^2_{h}$ of local minimizers  of $E_{h}$ in $\mathcal{A}_{h}$, which satisfy 
\[
v^\ell_{h}\to \int_0^{x_1}R^\ell_f(t)e_1\,\dt\,,  \quad \nabla_{h}v^\ell_{h}\to R^\ell_f\qquad (\ell=1,2)
\]
strongly in $W^{1,2}(\Omega,\R^3)$, and strongly in $L^2(\Omega,\mathbb{M}^{3\times 3})$, respectively. 
\end{theorem}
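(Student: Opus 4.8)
The plan is to feed the reduced local minimizers furnished by Theorem~\ref{localopt} into the dimension--reduction result Theorem~\ref{conv-loc}; no new analytic ingredient is needed, only a careful matching of neighbourhoods and of the two notions of local minimality involved.

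First I would invoke Theorem~\ref{localopt} to obtain $\delta>0$, a neighbourhood $U_0$ of $I$ in $W^{1,2}((0,L),SO(3))$ and a curve $(R,f)\colon(-\delta,\delta)\to U_0\times(f_{crit}-\eta_0,f_{crit}]$ with $(R,f)(0)=(I,f_{crit})$ along which, for $s\neq0$, $R(s)\neq I$ is an isolated local minimizer of $E_0^{f(s)}$ in $\mathcal A$ and \eqref{firstorder} holds. I would then recall from Proposition~\ref{definebranch} that $f'(0)=0$, $f''(0)<0$ and, by \eqref{parabola}, $sf'(s)<0$ for $0<|s|<\delta$; after shrinking $\delta$, $f$ is continuous, strictly increasing on $(-\delta,0)$, strictly decreasing on $(0,\delta)$, and $f(s)<f_{crit}=f(0)$ for $s\neq0$. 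Crucially, the proof of Theorem~\ref{localopt} actually establishes the coercivity bound $D^2E_0^{f(s)}(\varphi(s))[w,w]\ge c(s)\|w\|^2_{W^{1,2}}$ on $W^{1,2}_0((0,L),\R^3)$ with $c(s)>0$; combined with \eqref{frechet}, a second--order Taylor expansion and the change of variables of Lemma~\ref{inverse} (which preserves both the $W^{1,2}$ structure and the energy value), this shows that each $R(s)$, $s\neq0$, is in fact a \emph{strict} local minimizer of $E_0^{f(s)}$ in $\mathcal A$ with respect to the \emph{strong} $W^{1,2}$--topology, i.e.\ exactly the hypothesis required by Theorem~\ref{conv-loc}.

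Next I would choose $\eta$ and identify the two minimizers. Given the arbitrary neighbourhood $U$ of $I$, continuity of $s\mapsto R(s)$ in $W^{1,2}$ with $R(0)=I$ yields $\delta'\in(0,\delta]$ with $R(s)\in U$ for $|s|<\delta'$; fixing $s_0\in(0,\delta')$ I set $\eta:=\min\{f_{crit}-f(s_0),\,f_{crit}-f(-s_0)\}>0$ and shrink $\eta$ below the constant $\hat\eta$ of Proposition~\ref{isolated}. For $f\in(f_{crit}-\eta,f_{crit})$ strict monotonicity gives unique preimages $s_-(f)\in(-s_0,0)$ and $s_+(f)\in(0,s_0)$ with $f(s_\pm(f))=f$ and $s_\pm(f)\to0$ as $f\to f_{crit}^-$; I put $R_f^1:=R(s_-(f))$ and $R_f^2:=R(s_+(f))$, both in $U$ and both nontrivial by Theorem~\ref{localopt}(ii). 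After possibly decreasing $\eta$ so that $R(s_\pm(f))$ lies in the $W^{1,2}$--neighbourhood $O$ from Proposition~\ref{isolated}, that proposition forces $I$, $R_f^1$, $R_f^2$ to be the three (hence pairwise distinct) stationary points of $E_0^f$ in $O\cap\mathcal A$, so $R_f^1\neq R_f^2$ (alternatively, distinctness follows from \eqref{firstorder} since $s_+(f)-s_-(f)$ is of order $\sqrt{f_{crit}-f}$ while $R(s)-\mathcal G(sw^{*})=o(s)$ in $C^2$). Finally, for $\ell=1,2$ the configuration $R_f^\ell$ is a strict local minimizer of $E_0^f$ in $\mathcal A$ in the strong $W^{1,2}$--topology by the previous step, so Theorem~\ref{conv-loc}---whose standing assumption that $E_h$ be lower semicontinuous for weak $W^{1,2}$--convergence is precisely the hypothesis of the present theorem---produces sequences $v_h^\ell$ of local minimizers of $E_h$ in $\mathcal A_h$ with $v_h^\ell\to\int_0^{x_1}R_f^\ell(t)e_1\,\dt$ strongly in $W^{1,2}(\Omega,\R^3)$ and $\nabla_h v_h^\ell\to R_f^\ell$ strongly in $L^2(\Omega,\mathbb{M}^{3\times 3})$, which is the assertion.

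I expect the only genuinely delicate point to be the topology matching described in the second paragraph: Theorem~\ref{localopt} is carried out in the Banach space $C^2_0([0,L],\R^3)$ of Cardan angles and delivers an isolated minimizer there, whereas Theorem~\ref{conv-loc} requires strict local minimality with respect to the strong $W^{1,2}$--topology on the manifold $W^{1,2}((0,L),SO(3))$; reconciling the two is exactly the role of the second--differential coercivity, \eqref{frechet}, and Lemma~\ref{inverse}. The remaining ingredients---the selection of $\eta$ and the distinctness of $R_f^1$ and $R_f^2$, the latter handled by Proposition~\ref{isolated}---are routine.
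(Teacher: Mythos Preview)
Your proposal is correct and follows essentially the same route as the paper, which simply combines Theorem~\ref{conv-loc} with Theorem~\ref{localopt} and invokes \eqref{parabola} to ensure that $f(s)$ attains each value in $(f_{crit}-\eta,f_{crit})$ twice. Your additional care about upgrading ``isolated'' to ``strict $W^{1,2}$'' local minimality via the second-differential coercivity and Lemma~\ref{inverse}, and your use of Proposition~\ref{isolated} (or the parametrization $R(s)=\mathcal G(sw^*+s\psi(s))$) for distinctness, make explicit points the paper leaves implicit, but the underlying argument is the same.
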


\begin{proof}
The result immediately follows combining Theorem \ref{conv-loc} and Theorem \ref{localopt}, upon noticing that, by \eqref{parabola}, there exists $\eta>0$ such that the function $f(s)$ provided by Theorem \ref{localopt} takes each value in $(f_{crit}-\eta, f_{crit})$ twice in a neighborhood of $0$.
\end{proof}

\noindent {\bf Acknowledgements} The authors would like to thank A. Mielke for some useful discussions on bifurcation phenomena. The work of MC was partially supported by the DFG Collaborative Research Center TRR 109, ``Discretization in Geometry and Dynamics''.


\begin{thebibliography}{99}

\bibitem{ADS}
\newblock V. Agostiniani, A. DeSimone,
\newblock Rigorous derivation of active plate models for thin sheets of nematic elastomers,
\newblock Preprint \emph{ArXiv} 1509.07003 (2015).

\bibitem{ADSK}
\newblock V. Agostiniani, A. DeSimone, K. Koumatos,
\newblock Shape programming for narrow ribbons of nematic elastomers,
\newblock Preprint \emph{ArXiv} 1603.02088 (2016).

\bibitem{AmPr}
A.~Ambrosetti and G.~Prodi:
\newblock {\em A Primer of Nonlinear Analysis}.
\newblock Cambridge studies in advanced mathematics 34, Cambridge University Press (1993).

\bibitem{BMS}
\newblock K. Bhattacharya, M. Lewicka and M. Sch\"affner,
\newblock Plates with incompatible prestrain,
\newblock \emph{Arch. Rational. Mech. Anal.}, \textbf{221}(1) (2016), 143--181.


\bibitem{CRS16}
M.~Cicalese, M.~Ruf and F.~Solombrino:
\newblock On global and local minimizers of prestrained thin elastic rods.
\newblock {\em Preprint} (2016), {\rm arXiv } 1606.04524.

\bibitem{CrRa}
\newblock M.G. Crandall and P.H. Rabinowitz:
\newblock Bifurcation from simple eigenvalues.
\newblock {\em J. Functional Analysis}, {\bf 8} (1971), 321--340.

\bibitem{CrRa73}
\newblock M.G. Crandall and P.H. Rabinowitz:
\newblock Bifurcation, perturbation of simple eigenvalues, and linearized stability.
\newblock {\em Arch. Ration. Mech. Anal.}, {\bf 52} (1973), 161--180.

\bibitem{KOB}
\newblock R.V. Kohn and E. O'Brien,
\newblock On the bending and twisting of rods with misfit,
\newblock \emph{preprint available at http://www.math.nyu.edu/faculty/kohn/papers/kohn-obrien-rods-with-misfit.pdf} (2016), submitted.


\bibitem{KS}
\newblock R.V. Kohn and  P. Sternberg,
\newblock Local minimisers and singular perturbations,
\newblock \emph{Proc. Roy. Soc. Edinburgh Sect. A} \textbf{111} (1989),  69--84.


\bibitem{LA}
\newblock C. Lestringant and B. Audoly,
\newblock Elastic rods with incompatible strain: Macroscopic versus microscopic buckling, 
\newblock \emph{J. Mech. Phys. Solids} (2017), published online at http://dx.doi.org/10.1016/j.jmps.2016.12.001.


\bibitem{LMP1}
\newblock M. Lewicka, L. Mahadevan and M.R. Pakzad,
\newblock The F\"oppl-von Karmann equations for plates with incompatible strains,
\newblock \emph{Proc. R. Soc. A} \textbf{467} (2011), 402--426.

\bibitem{LMP2}
\newblock M. Lewicka, L. Mahadevan and M.R. Pakzad,
\newblock Models for elastic shells with incompatible strains,
\newblock \emph{Proc. R. Soc. A} \textbf{470} (2014), 1471--2946.

\bibitem{tendril}
\newblock T.McMillen and A. Goriely,
\newblock Tendril Perversion in Intrinsically Curved Rods,
\newblock \emph{Journal of Nonlinear Science}, \textbf{12}(3) (2002), 241--281.

\bibitem{Plos}
\newblock J. Liu, J. Huang, T. Su, K. Bertoldi and D.R. Clark,
\newblock Structural transition from helices to hemihelices,
\newblock \emph{PLoS ONE}, \textbf{9} (2014), e93183.

\bibitem{S}
B. Schmidt,
\newblock Plate theory for stressed heterogeneous multilayers of finite bending energies.
\newblock {\em J. Math. Pures Appl.}, \textbf{88}(1) (2007), 107--122.

\end{thebibliography}
\end{document}